\documentclass[12pt]{amsart}

\author{Gary Kennedy}
\address{Department of Mathematics\\
  The Ohio State University\\
  1760 University Drive \\
  Mansfield, Ohio 44906}
\email[G.~Kennedy]{kennedy.28@osu.edu}
\author{Lee J. McEwan}
\address{Department of Mathematics\\
  The Ohio State University\\
  1760 University Drive \\
  Mansfield, Ohio 44906}
\email[L.~McEwan]{mcewan.1@osu.edu}

\usepackage{amsmath}
\usepackage{amssymb,amsmath}
\usepackage{amsthm}
\usepackage{bm}  
\usepackage{geometry} 
\geometry{letterpaper} 
\usepackage[parfill]{parskip}    
\usepackage{graphicx}
\usepackage{amssymb}
\usepackage{epstopdf}
\usepackage{enumerate}
\usepackage{graphpap}
\usepackage{tikz}
\usepackage{enumitem}
\usepackage[all]{xy}

\usepackage{lipsum}

\usepackage{comment}

\usetikzlibrary{matrix,arrows,decorations.pathmorphing}
\DeclareGraphicsRule{.tif}{png}{.png}{`convert #1 `dirname #1`/`basename #1 .tif`.png}

\newcommand{\HH}{\mathbf H}
\newcommand{\CC}{\mathbb C}
\newcommand{\VV}{\mathbf V}

\newcommand{\ibar}{\bm\tilde{\imath}}

\newcommand{\lcm}{\operatorname{lcm}}
\newcommand{\SL}{\operatorname{SL}}

\newcommand{\dt}{d_{\bullet}}
\newcommand{\ct}{c_{\bullet}}
\newcommand{\St}{S_{\bullet}}
\newcommand{\zetat}{\zeta_{\bullet}}
\newcommand{\HHt}{\HH_{\bullet}}
\newcommand{\VVt}{\VV_{\bullet}}
\newcommand{\chit}{\chi_{\bullet}}
\newcommand{\xit}{\xi_{\bullet}}

\newcommand{\llambda}{\boldsymbol{\lambda}}
\newcommand{\sw}{\operatorname{sw}}
\newcommand{\bs}{\boldsymbol{s}}

\newtheorem{theorem}{Theorem}
\newtheorem{corollary}[theorem]{Corollary}
\newtheorem{lemma}[theorem]{Lemma}

\newcommand{\qedwhite}{\hfill \ensuremath{\Box}}


\title{On the Milnor Fiber Boundary of a Quasi-Ordinary Surface}

\begin{document}

\begin{abstract}
We give a recursive formula, expressed in terms of the characteristic tuples, for the Betti numbers of the boundary of the Milnor fiber of an irreducible quasi-ordinary surface $S$
which is reduced in the sense of Lipman.
The singular locus of $S$ consists of two components, and for each 
component we introduce a sequence of increasingly simpler surfaces.
Our recursion depends on a detailed comparison of these two sequences.
In the penultimate section, we indicate how we expect pieces of these associated surfaces to glue
together to reconstruct the Milnor fiber of $S$ and its boundary.
\end{abstract}

\maketitle

 
\section{Introduction} \label{introduction}
 
It is well-known that a quasi-ordinary hypersurface germ (of any dimension) has many strong similarities to the germ of a classical plane curve (see especially \cite{Li1}, but also \cite{BMN}, \cite{Gau}, \cite{GPMN}, \cite{Li0}, \cite{Li2}, \cite{MN}). The Milnor fiber of an irreducible curve germ is constructed by plumbing its resolution graph, with the structure of the fiber corresponding to the rupture components. These in turn correspond to the essential terms of the Puiseux expansion of the curve. Something similar happens with the boundary of a quasi-ordinary surface $S$. A transverse slice by a hyperplane at a generic point of a singular component of $S$ cuts out a curve germ, which though typically not irreducible,  has the same Puiseux exponents in each component. The Milnor fiber of the slice 
can be regarded as the fiber of two fibrations, called \emph{horizontal} and \emph{vertical}.
We are concerned with the \emph{vertical fibration}, which is described in Section \ref{setup}.
As there are two components in the singular locus of $S$, there are two such spaces. Each space can be obtained by a relative version of the plumbing construction for a curve Milnor fiber: it is constructed by gluing standard pieces corresponding to  single terms of the Puiseux expansion.
 
For a general treatment of Milnor fiber boundaries for non-isolated surface singularities, see \cite{NS}.
 
Our starting point is a series defining the germ at the origin of an irreducible quasi-ordinary surface $S$. We assume this series contains only essential characteristic terms, each with coefficient 1:
\begin{equation} \label{prototype}
\zeta = \sum_{j=1}^{e} x_1^{\lambda_{1j}} x_2^{\lambda_{2j}}.
\end{equation}
(The terminology is explained in Section \ref{setup}.) Such a series was called a {\em prototype} in \cite{KM}. Working with just this case is not a serious restriction: see our remarks in Section \ref{setup}.  We also assume $\lambda_{11}\lambda_{12} \neq 0$; Lipman (\cite {Li2}) calls such a surface ``reduced". We note that this condition is preserved under the partial resolution procedure used in our basic construction.
In the literature $\zeta$ is often called a \emph{branch}.
The conjugates of $\zeta$ generate a function
\begin{equation} \label{fproduct}
f(x_1,x_2,y) = \prod(y - \zeta(x_1,x_2)) \qquad \text{(product over all conjugates)}
\end{equation}
whose vanishing locus is $S$.
This quasi-ordinary surface maps via the projection $\pi: (x_1,x_2,y) \to (x_1,x_2)$ onto $\CC^2$, and the map is unramified over the complement of the coordinate axes.
 
In \cite{KM} we presented a recursion for calculating some basic information about the quasi-ordinary surface $S$, using as input the same information for two associated surfaces, which we call its \emph{truncation} and its \emph{derived surface}.
In Section \ref{setup} we give the basic definitions, then recall our recursive setup and formulas.
Our recursion can be carried out in two different ways (depending on which of the two coordinate axes is sliced, as explained in Section \ref{setup}), and to draw our conclusions we need to make a precise comparison between the resulting sequences of surfaces and the bundles they carry. This elaborate but elementary comparison is carried out in Section \ref{comparing}.

The results of the comparison are applied in Section \ref{topresults}, which contains the main results of the article. Here we show (Theorem \ref{eigen}) that the vertical monodromies induce \emph{diagonalizable} actions on the part of homology corresponding to eigenvalue 1. This fact, combined with the Wang sequence for each vertical fibration and our vertical monodromy formula from \cite{KM}, allows us to calculate the first Betti number for the total space of each vertical fibration (Theorem \ref{firstbettivf}). The two spaces have the same first Betti number, and indeed the main result of Section \ref{comparing} implies that the two total spaces of vertical fibrations are ``nearly" homeomorphic (i.e., homeomorphic after removing some standard pieces)  \emph{as spaces} (not as fibrations). (This last result is not proved here, but will be important in the sequel.) The boundary of the Milnor fiber is the union of these two total spaces, glued along a torus. We thus deduce the first Betti number of this boundary (Theorem \ref{firstbetti}). In particular, the first Betti number of the boundary is always even.

We make some informal remarks in Section \ref{notes} further elucidating the topological nature of the boundary, and outlining results which we intend to present in detail in forthcoming work. In Section \ref{example}, we present an example of our calculations.


\section{Basic definitions and the recursive setup} \label{setup}

Let $f(x_1,x_2,y)$ be the defining function of $S$ in $\mathbb{C}^3$, as in (\ref{fproduct}). 
The \emph{Milnor fiber} $M$ is the intersection of the surface $f(x_1,x_2,y) = \epsilon$
with a small ball centered at the origin, where first the radius is chosen to be sufficiently
small and then $\epsilon$ is chosen to be sufficiently small.
We denote the boundary of $M$ by $\partial M$; it
is a closed 3-dimensional manifold (see \cite{Mi}, and \cite{D} for the non-isolated case).

In the following discussion we let $i=1 \text{ or } 2$. For convenience we also introduce a notational device: if $i=1$ then $\ibar=2$, and \emph{vice versa}.

We define the {\em Milnor fiber of a transverse slice of the $x_i$-axis} to be the set of points $(x_1,x_2,y)$ obtained by following, in order, three steps:
\begin{itemize}
 \item[(1)]  
  require that  $\|(x_1,x_2,y)\|\leq\delta$, a sufficiently small (positive real) radius,
 \item[(2)]  
  require  that $x_i$ be a fixed number sufficiently close to (but different from) zero,
\item[(3)]
  finally require that  $f(x_1,x_2,y)=\epsilon$, a number sufficiently close to (but different from) zero.
 \end{itemize}  
Denote this transverse Milnor fiber by $F_{x_i}$. By keeping $x_i$ fixed but letting $\epsilon$ vary over a circle centered at 0, we obtain the {\em horizontal fibration}.
Keeping $\epsilon$ fixed but letting $x_i$ vary over a circle centered at 0, we obtain the {\em vertical fibration}. 
Thus we have a fibration over a torus.
Let 
\begin{equation}
h_{q, x_i}:H_q(F_{x_i}) \to H_q(F_{x_i})
\end{equation}
and 
\begin{equation} \label{vmonodromy}
v_{q, x_i}:H_q(F_{x_i}) \to H_q(F_{x_i})
\end{equation}
be the respective monodromy operators, taking coefficients over $\mathbb{C}$;
we call them the \emph{horizontal monodromy} and \emph{vertical monodromy}.
Let $V_{x_i}$ be the total space of the vertical fibration; we will call it the {\em vertical fibration space}.
It is useful at times to work with an equivalent \emph{square ball} formulation:
\begin{equation}\label{vbundle}
V_{x_i} = \{f(x_1,x_2,y) = \epsilon \} \cap \{ |x_i| = \delta, |x_{\ibar}| \leq \delta \}
\end{equation}
(where $|\epsilon| \ll \delta$),
fibering over the circle $|x_i| = \delta$.

The graded characteristic functions
$$
\HH(i)(t)=\frac{\det(tI-h_{0, x_i})}{\det(tI-h_{1, x_i})}
\qquad
\text{and}
\qquad
\VV(i)(t)=\frac{\det(tI-v_{0, x_i})}{\det(tI-v_{1, x_i})}
$$
are called the {\em horizontal} and \emph{vertical monodromy zeta functions}.
 
As we mentioned in Section \ref{introduction},
we are working only with a \emph{prototype surface}
in whose defining series (\ref{prototype}) there are only essential terms, each taken with
coefficient 1.
To justify this restriction, we 
refer the reader to the discussion at the beginning of Section 2 of \cite{Li1}.
In a similar vein, Theorem 4.1 of our prior paper \cite{KM} says that a quasi-ordinary surface and its prototype have the same horizontal and vertical monodromy zeta functions.

In \cite{KM} we presented a recursion for calculating some basic information about the quasi-ordinary surface $S$, using as input the same information for two associated quasi-ordinary surfaces called its \emph{truncation} $\St$
and \emph{derived surface} $S'(i)$. The recursion leads to two sequences of associated surfaces,
as indicated in the following \emph{basic diagram}:

\begin{equation} \label{basicdiagram}
\xymatrix{
S \ar[d] \ar[r] &S'(i)\ar[d] \ar[r] &S''(i) \ar[d]\ar[r] &S^{(3)}(i) \ar[d]\ar[r] 
&\cdots \ar[r] &S^{(k)}(i) \ar[d] \ar[r]
&\cdots \ar[r]  &S^{(e-1)}(i) \ar@{=}[d] \\
\St &\St'(i) &\St''(i) &\St^{(3)}(i) & &\St^{(k)}(i) & &\St^{(e-1)}(i)
}
\end{equation}
The surface $S^{(k)}(i)$ is again a prototype, and it has $e-k$ essential terms,
while
each of the surfaces in the bottom row has a single essential term.
The recursion treats the exponents of (\ref{prototype}) in an asymmetric way;
thus there are two ways to carry out this recursion (depending on whether $i=1 \text{ or } 2$), and two possible sequences of associated surfaces.

The {\em truncation} of $S$ is the surface $\St$ determined by
\begin{equation}\label{truncation}
\zetat =  x_1^{\lambda_{11}} x_2^{\lambda_{21}},
\end{equation}
i.e., by the initial term of $\zeta$.
Each vertical arrow in the basic diagram (\ref{basicdiagram}) represents truncation.

We write the two exponents of $\zetat$ in lowest terms:
\begin{equation} \label{eq:standard1} 
\lambda_{i1} = \frac{n_{i}}{m_{i}},
\quad \text{where }
\gcd(n_{i}, m_{i}) = 1. 
\end{equation} 

As we showed in  \cite{KM}, the degree of $\St$ (i.e., the number of sheets for the projection $\pi$)
is given by 
$$
\dt=\lcm(m_{1},m_{2}).
$$
The quantity
\begin{equation} \label{defb}
b_{i}=\dt/m_{\ibar}
\end{equation}
occurs repeatedly in the calculations in this paper, and it has a geometric significance:
although the surface $\St$ is irreducible, its transverse slice $x_i=\text{constant}$ 
consists of $b_{i}$ irreducible curves, for each of which the number of sheets
(under projection to the line $x_i=\text{constant}$) is $m_{\ibar}$.
Another quantity which appears is
\begin{equation} \label{cdef}
\ct =\gcd(n_{1},n_{2}).
\end{equation}
We would like to know a simple geometric interpretation of this number,
which (along with $\dt$) is one of the two basic ingredients of our formula
in Theorem \ref{eigencalc}.

We define  
$r_{i}$ and $s_{i}$ to be the smallest nonnegative integers so that
\begin{equation} \label{eq:toric}
\begin{bmatrix} m_{\ibar} & n_{\ibar} \\ r_{i} & s_{i} \end{bmatrix}
\end{equation} 
has determinant 1.
The {\em derived surface} $S'(i)$ is the quasi-ordinary surface determined by the series
\begin{equation}
\zeta'(i)=\sum_{j=1}^{e-1}x_1^{\lambda'_{1j}(i)} x_2^{\lambda'_{2j}(i)},
\end{equation} 
where the new exponents are computed by these formulas:
\begin{equation} \label{eq:basic}
\begin{aligned}
\lambda'_{\ibar j} (i) &
= m_{\ibar}(\lambda_{\ibar,j + 1} - \lambda_{\ibar 1} + \dt\lambda_{\ibar 1}), \\
\lambda'_{i j} (i) &
=b_i (\lambda_{i,j + 1} - \lambda_{i1} + \dt \lambda_{i1}) 
+ b_{i}r_{i} \lambda_{i1}\lambda'_{\ibar j} (i).
\end{aligned}
\end{equation}
We introduce the following shorthand: for a finite sequence 
$\boldsymbol{s} = \{ s_j \}$, let
\begin{align}
T(\bs) = \{s_{j+1} - s_1 + \dt s_1\}.
\end{align}
(Note that this reduces the length of the sequence by one.)
Then the formulas in (\ref{eq:basic})
say that 
\begin{equation} \label{eq:basicshort}
\begin{aligned}
\llambda'_{\ibar} (i) &
= m_{\ibar} \,T(\llambda_{\ibar}), \\
\llambda'_{i} (i) &
= b_{i} \,T(\llambda_{i}) + b_{i}r_{i} \lambda_{i1} \, \llambda'_{\ibar} (i).
\end{aligned}
\end{equation}

\par
As indicated in our basic diagram (\ref{basicdiagram}), the  derivation process may be iterated.
The surface $S^{(k)}(i)$ will be called the \emph{$k^{th}$ derived surface}.
Working inductively, suppose that
$S^{(k)}(i)$ has defining series
\begin{equation} 
\zeta^{(k)}(i) = \sum_{j=1}^{e-k} x_1^{\lambda^{(k)}_{1 j}(i)} x_2^{\lambda^{(k)}_{2 j}(i)}.
\end{equation}
Write its leading exponents in lowest terms:
\begin{equation} \label{eq:standardk} 
\lambda^{(k)}_{i1} (i) = \frac{n_{i}^{(k)}(i)}{m_{i}^{(k)}(i)}\,,
\qquad\quad
\lambda^{(k)}_{\ibar 1} (i) = \frac{n_{\ibar}^{(k)}(i)}{m_{\ibar}^{(k)}(i)}\,.
\end{equation} 
Then the degree of its truncation $\St^{(k)}(i)$ is
\begin{equation} \label{degtruncation}
\dt^{(k)}(i)=\lcm(m_{1}^{(k)}(i),m_{2}^{(k)}(i)).
\end{equation}
As before we define
\begin{equation} \label{defbk}
b^{(k)}_{i}=\dt^{(k)}(i)/m_{\ibar}^{(k)}(i)
\end{equation}
and
\begin{equation} \label{defck}
\ct^{(k)} (i)=\gcd(n_{1}^{(k)}(i),n_{2}^{(k)}(i)),
\end{equation}
and again we define  
$r_{i}^{(k)}$ and $s_{i}^{(k)}$ to be the smallest nonnegative integers so that
\begin{equation} \label{eq:torick}
\det
\begin{bmatrix} m_{\ibar}^{(k)}(i) & n_{\ibar}^{(k)}(i) \\ r_{i}^{(k)} & s_{i}^{(k)} \end{bmatrix}
=1.
\end{equation} 
Then $S^{(k+1)}(i)$ is the quasi-ordinary surface determined by the series
\begin{equation} \label{derivedk}
\zeta^{(k+1)}(i) = \sum_{j=1}^{e-k-1} x_1^{\lambda^{(k+1)}_{1 j}(i)} x_2^{\lambda^{(k+1)}_{2 j}(i)}
\end{equation}
where the new exponents are computed by these formulas:
\begin{equation} \label{eq:basick}
\begin{aligned}
\lambda^{(k+1)}_{\ibar j} (i) &
= m_{\ibar}^{(k)}(i)
\left(\lambda^{(k)}_{\ibar,j + 1}(i) - \lambda^{(k)}_{\ibar 1}(i) + \dt^{(k)}(i)\lambda^{(k)}_{\ibar 1}(i)\right), \\
\lambda^{(k+1)}_{i j} (i) &
=b^{(k)}_i \left(\lambda^{(k)}_{i,j + 1}(i) - \lambda^{(k)}_{i1}(i) + \dt^{(k)}(i) \lambda^{(k)}_{i1}(i)\right) 
+ b^{(k)}_{i}r^{(k)}_{i} \lambda^{(k)}_{i1}(i)\lambda^{(k+1)}_{\ibar j} (i).
\end{aligned}
\end{equation}
Again this is made more palatable by a shorthand: 
for a finite sequence 
$\boldsymbol{s} = \{ s_j \}$, let
\begin{align}
T^{(k)}(\bs) = \{s_{j+1} - s_1 + \dt^{(k)}(i) s_1\}.
\end{align}
Then the formulas in (\ref{eq:basick})
say that 
\begin{equation} \label{eq:basickshort}
\begin{aligned}
\llambda^{(k+1)}_{\ibar} (i) &
= m_{\ibar}^{(k)}(i) \,T^{(k)}(\llambda^{(k)}_{\ibar}(i)), \\
\llambda^{(k+1)}_{i} (i) &
= b_{i}^{(k)} \,T^{(k)}(\llambda^{(k)}_{i}(i)) 
+ b^{(k)}_{i}r^{(k)}_{i} \lambda^{(k)}_{i1}(i) \, \llambda^{(k+1)}_{\ibar} (i).
\end{aligned}
\end{equation}


For the surface $S$, let $d$ denote its degree. Let 
$\chi(i)$ denote the Euler characteristic of the
transverse Milnor fiber $F_{x_i}$, and as above let
$\HH(i)$, and $\VV(i)$ denote its horizontal and vertical monodromy zeta functions.
For the truncation $\St$,
let $\dt$, $\chit(i)$, $\HHt(i)$, and $\VVt(i)$ denote 
the same quantities. For the derived surface $S'(i)$,
denote these quantities by 
$d'(i)$, $\chi'(i)$, $\HH'(i)$, and $\VV'(i)$. Theorem 4.3 of \cite{KM} gives us the following formulas:
\begin{enumerate}[label=(\alph*)]
\item \label{degreerecursion}
$$
d=\dt d'(i)
$$
\item  \label{chibase}
$$
\chit(i)=m_{\ibar}b_{i}+n_{\ibar}b_{i}-m_{\ibar}n_{\ibar}(b_{i})^2
=\dt+\dt\lambda_{\ibar 1}-(\dt)^2\lambda_{\ibar 1}
$$
\item  \label{chirecursion}
$$
\chi(i)=d'(\chit(i)-b_{i})+b_{i}\chi'(i)=d'\chit(i)+b_{i}(\chi'(i)-d')
$$
\item  \label{horbase}
$$
\HHt(i)(t)=\frac {(t^{\dt}-1)(t^{n_{\ibar}b_{i}}-1)} {(t^{n_{\ibar}\dt}-1)^{b_{i}}}
$$
 \item  \label{horcursion}
$$
\HH(i)(t)=\frac {\HHt(t^{d'})(\HH'(t))^{b_{i}}} {(t^{d'}-1)^{b_{i}}}
$$
\item \label{vertbase}
$$
\VVt(i)(t)=\frac {(t-1)^{\dt}} {(t^{n_{\ibar}b_{i}/\ct}-1)^{\ct(\dt -1)}}
$$
\item  \label{vertrecursion}
$$
 \VV(i)(t)=\frac{(\VVt(t))^{d'}\VV'(t^{b_{i}})} {(t^{b_{i}}-1)^{d'}}
$$
\end{enumerate}
Note that \ref{degreerecursion} implies that $d'(1)=d'(2)$; in the subsequent
formulas this quantity is simply denoted by $d'$.
\par
In a subsequent section, we will apply formulas \ref{vertbase}
and \ref{vertrecursion} to the $k^{th}$ derived surface $S^{(k)}(i)$ rather than to $S$,
and we record the needed versions of those formulas here.
Letting $d^{(k)}(i)$ and $\VV^{(k)}(i)$ denote the degree
and the vertical monodromy zeta function of this surface, and letting 
$\dt^{(k)}(i)$ and
$\VVt^{(k)}(i)$
denote the same quantities for its truncation $\St^{(k)}(i)$,
we have these formulas:
\begin{equation} \label{vertbasek}
\VVt^{(k)}(i)(t)
=\frac {\left(t-1\right)^{\dt^{(k)}(i)}} 
{\left(t^{n_{\ibar}^{(k)}(i)b_{i}^{(k)}/\ct^{(k)}(i)}-1\right)
^{\ct^{(k)}(i)
\left(\dt^{(k)}(i) -1\right)}}
\qquad \text{(for $k=0$ to $e-1$)},
\end{equation}
\begin{equation} \label{vertrecursionk}
 \VV^{(k)}(i)(t)=\frac{\left(\VVt^{(k)}(i)(t)\right)^{d^{(k+1)}(i)}
 \VV^{(k+1)}(i)\left(t^{b_{i}^{(k)}}\right)}
{\left(t^{b_{i}^{(k)}}-1\right)^{d^{(k+1)}(i)}}
\qquad \text{(for $k=0$ to $e-2$)}.
\end{equation}


\section{Comparing the two recursions} \label{comparing}

In the previous section we presented a derivation process leading
to the basic diagram of surfaces in (\ref{basicdiagram}). There are two such diagrams,
depending on our choice of $i=1 \text{ or } 2$. This section is devoted to a detailed
comparison of the surfaces in the two diagrams.
\par
Recall that $d^{(k)}(i)$ and $\dt^{(k)}(i)$ denote, respectively, the degrees
of the surfaces $S^{(k)}(i)$ and $\St^{(k)}(i)$.

\begin{theorem} \label{equald}
The degrees of the surfaces  in the two basic diagrams are the same:
\begin{enumerate}
\item $d^{(k)}(1)=d^{(k)}(2)$ for each $k=1,\dots,e-1$.
\item $\dt^{(k)}(1)=\dt^{(k)}(2)$ for each $k=1,\dots,e-1$.
\end{enumerate}
\end{theorem}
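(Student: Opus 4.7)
The plan is a joint induction on $k$, proving (1) and (2) together. The base case $k=0$ is trivial: neither $d=d^{(0)}$ nor $\dt=\dt^{(0)}$ depends on a recursion index. For the inductive step, suppose (1) and (2) hold at level $k$, and write $d^{(k)}$ and $\dt^{(k)}$ for their common values. Applying formula (a) of Theorem 4.3 of \cite{KM} to the surface $S^{(k)}(i)$ gives $d^{(k)}(i)=\dt^{(k)}(i)\cdot d^{(k+1)}(i)$, so $d^{(k+1)}(i)=d^{(k)}/\dt^{(k)}$ is immediately independent of $i$; this establishes (1) at level $k+1$.

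The delicate step is (2) at level $k+1$: although $S^{(k+1)}(1)$ and $S^{(k+1)}(2)$ are genuinely different surfaces with different leading exponents, their truncations must have the same degree. I would attack this by unpacking the formulas (\ref{eq:basick}) modulo $\ZZ$. The ``transverse'' coordinate simplifies immediately: since $m_{\ibar}^{(k)}(i)\,\lambda^{(k)}_{\ibar 1}(i)=n_{\ibar}^{(k)}(i)\in\ZZ$, the first equation of (\ref{eq:basick}) yields $\lambda^{(k+1)}_{\ibar 1}(i)\equiv m_{\ibar}^{(k)}(i)\,\lambda^{(k)}_{\ibar 2}(i)\pmod{\ZZ}$, so the denominator $m_{\ibar}^{(k+1)}(i)$ is read off directly. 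A parallel but longer reduction of the second equation of (\ref{eq:basick}) expresses the denominator of $\lambda^{(k+1)}_{i 1}(i)$ in terms of $\lambda^{(k)}_{i 2}(i)$, $\lambda^{(k)}_{\ibar 2}(i)$, and the auxiliary quantities $b_i^{(k)}$, $r_i^{(k)}$, $n_i^{(k)}(i)$, and $g^{(k)}=\gcd(m_1^{(k)}(i),m_2^{(k)}(i))$. The required equality of $\lcm$s then becomes a $\gcd$/$\lcm$ identity under the swap $i\leftrightarrow\ibar$.

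The main obstacle is that the two pairs of denominators at $i=1$ and $i=2$ are in general not even the same set (small computations already produce pairs like $(3,2)$ opposite $(3,6)$), so the $\lcm$ equality is genuinely non-formal. A cleaner alternative that would sidestep the case analysis is to identify $\dt^{(k)}(i)$ with the intrinsic lattice index $[L_{k+1}:L_k]$, where $L_j=\ZZ^2+\ZZ\lambda_1+\cdots+\ZZ\lambda_j\subset\QQ^2$; independence of $i$ would then be automatic. The base case $\dt=[L_1:\ZZ^2]$ is literally the definition of $\lcm(m_1,m_2)$, and one expects the formulas (\ref{eq:basic}) to be tuned precisely so that $\dt^{(k+1)}(i)$ equals the order of $\lambda_{k+2}$ modulo $L_{k+1}$. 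Either route ultimately rests on the same modular computation, which I expect to be the bulk of the proof.
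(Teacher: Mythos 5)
Your treatment of part (1) is sound and matches the paper: granting (2) at level $k$, formula \ref{degreerecursion} applied to $S^{(k)}(i)$ gives $d^{(k+1)}(i)=d^{(k)}/\dt^{(k)}$, which is manifestly independent of $i$. But part (2) --- which you correctly identify as the delicate step --- is not actually proved; you sketch two strategies and defer ``the bulk of the proof,'' and neither sketch closes the gap. The direct route (reading the denominators $m^{(k+1)}_{1}(i)$ and $m^{(k+1)}_{2}(i)$ off of (\ref{eq:basick}) and verifying an equality of least common multiples under the swap $i\leftrightarrow\ibar$) is precisely the kind of exponent comparison the paper only achieves later, in Theorem \ref{matrixeq} and Corollary \ref{twolcms}; and those results use Theorem \ref{equald} as input (via (\ref{bmswap}) and Theorem \ref{divides}), so pursuing this route here risks circularity, or at best a long computation you have not carried out. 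The lattice-index alternative is a correct heuristic, but as you concede it ``rests on the same modular computation'': the identity $\dt^{(k)}(i)=[L_{k+1}:L_k]$ is not automatic from the definitions and would itself have to be proved.

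The idea you are missing is the paper's reduction of (2) to (1): truncation commutes with derivation. The leading exponents $\lambda^{(k)}_{\cdot 1}(i)$, and hence $\dt^{(k)}(i)$, depend only on the first few terms of $\zeta$, so $\St^{(k)}(i)$ coincides with $\tilde S^{(k)}(i)$, where $\tilde S$ is the prototype defined by the correspondingly shortened series (\ref{shortened}). Thus $\dt^{(k)}(i)$ is itself the \emph{full} degree of the $k^{th}$ derived surface of another prototype, and statement (1) --- proved by induction on $k$ simultaneously for all prototypes --- applies to $\tilde S$ to yield $\dt^{(k)}(1)=\dt^{(k)}(2)$. This single observation eliminates the modular computation entirely. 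If you prefer the lattice picture, note that it is this same observation in disguise: the interpretation $\dt^{(k)}(i)=[L_{k+1}:L_k]$ follows from the shortened-series identification together with the multiplicativity $d=\dt\, d'$, not the other way around.
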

\begin{proof}
We first observe that the formulas which compute
$\dt^{(k)}(i)$ depend only on the first $k$ terms of the
series (\ref{prototype}). Thus we obtain the same value
for $\dt^{(k)}(i)$ if we replace this series by the shortened series
\begin{equation}  \label{shortened}
\tilde{\zeta} = \sum_{j=1}^{k} x_1^{\lambda_{1j}} x_2^{\lambda_{2j}}.
\end{equation}
Letting $\tilde{S}$ denote the surface defined by this shortened series,
the three surfaces
$S_{\bullet}^{(k)}(i)$, $\tilde{S}_{\bullet}^{(k)}(i)$, and $\tilde{S}^{(k)}(i)$
are the same.
Thus if we know that $d^{(k)}(1)=d^{(k)}(2)$ for each prototype surface,
we infer that $\dt^{(k)}(1)=\dt^{(k)}(2)$ as well.
We now prove the first equality by induction on $k$.
The base case $d'(1)=d'(2)$ has already been observed.
The inductive step is accomplished by applying
formula \ref{degreerecursion} of the previous section to the surface
$S^{(k)}(i)$; this tells us that
$$
d^{(k)}(i)=\dt^{(k)}(i) \, d^{(k+1)}(i).
$$
\end{proof}
In view of the theorem, we can simplify notation by writing $\dt^{(k)}$ instead of
$\dt^{(k)}(1)$ or $\dt^{(k)}(2)$. By repeated application of \ref{degreerecursion},
we infer that the degree of our surface $S$ is given by
\begin{equation} \label{degreeS}
d=\dt \dt' \dt^{(2)} \cdots \dt^{(e-1)}.
\end{equation}

\begin{corollary} \label{twolcms}
\quad $\dt^{(k)}=\lcm(m_{1}^{(k)}(1),m_{2}^{(k)}(1))
=\lcm(m_{1}^{(k)}(2),m_{2}^{(k)}(2))$
\end{corollary}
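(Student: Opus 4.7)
The plan is to recognize that this corollary is essentially a direct unpacking of definitions combined with Theorem \ref{equald}. First I would invoke the defining formula \eqref{degtruncation}, applied separately to each choice $i=1$ and $i=2$, which yields the two identities
\begin{align*}
\dt^{(k)}(1) &= \lcm\bigl(m_{1}^{(k)}(1), m_{2}^{(k)}(1)\bigr), \\
\dt^{(k)}(2) &= \lcm\bigl(m_{1}^{(k)}(2), m_{2}^{(k)}(2)\bigr).
\end{align*}
Then I would cite Theorem \ref{equald}(2), which asserts that these two quantities coincide; their common value has just been renamed $\dt^{(k)}$ in the paragraph following the proof of the theorem. Chaining these three equalities produces exactly the displayed statement of the corollary.

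There is no real obstacle here, since the entire substance of the corollary is already contained in Theorem \ref{equald}. What the statement does highlight, however, is a genuine arithmetic phenomenon worth emphasizing: although the two pairs of lowest-terms denominators $(m_{1}^{(k)}(1), m_{2}^{(k)}(1))$ and $(m_{1}^{(k)}(2), m_{2}^{(k)}(2))$ produced by the two derivation streams generally differ as pairs, their least common multiples must coincide. I would therefore present the proof as a single short chain of equalities, without further elaboration, since the nontrivial work has already been carried out in the preceding theorem.
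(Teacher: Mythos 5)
Your proof is correct and matches the paper's: the corollary is just formula (\ref{degtruncation}) applied for $i=1$ and $i=2$, combined with Theorem \ref{equald}(2) and the notational convention $\dt^{(k)}$ introduced after that theorem. The paper's own proof is the one-line "Refer to formula (\ref{degtruncation})," which encapsulates exactly the chain of equalities you spell out.
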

\begin{proof}
Refer to formula (\ref{degtruncation}).
\end{proof}


Now consider the first formula of (\ref{eq:basic}):
\begin{equation} \label{transverse}
\lambda'_{\ibar j} (1) 
= m_{\ibar}(\lambda_{\ibar,j + 1} - \lambda_{\ibar 1} + \dt\lambda_{\ibar 1}).
\end{equation}
One can interpret it as a formula to be applied 
to the plane curve with Puiseux series
\begin{equation} \label{transverserec}
\sum_{j=1}^{e} x_{\ibar}^{\lambda_{\ibar j}},
\end{equation}
thus obtaining a derived curve with Puiseux series
\begin{equation}
\sum_{j=1}^{e-1} x_{\ibar}^{\lambda'_{\ibar j}(1)}.
\end{equation}
This derivation process for a curve was used in \cite{KM},
and we need to understand some aspects of it here.
To begin, observe that the curve defined by (\ref{transverserec})
is one of the irreducible components of the transverse
slice of $S$ by $x_i=1$;
the other components have the same essential exponents
but differing coefficients.
\par
Consider the denominator ${m_{\ibar}^{(k)}(i)}$
of $\lambda^{(k)}_{\ibar 1} (i)$,
an exponent appearing in the series (\ref{derivedk}) for the  
$k^{th}$ derived surface, equivalently a denominator
appearing in the $k^{th}$ derived curve of the transverse slice by $x_i=1$.
It can also be computed 
from series (\ref{prototype})
without going through the derivation process,
as the following theorem shows.
\begin{theorem} \label{interpretm}
The exponent $\lambda_{\ibar k}$ may be written as a fraction with
denominator 
\begin{equation} \label{denominator}
m_{\ibar}  \, m'_{\ibar}(i) \, m_{\ibar}^{(2)}(i) \cdots m_{\ibar}^{(k-1)}(i)
\end{equation}
and with numerator relatively prime to $m_{\ibar}^{(k-1)}(i)$.
\end{theorem}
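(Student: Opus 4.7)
My plan is induction on $k$, the key move being to invert the first formula of (\ref{eq:basic}) and apply the inductive hypothesis to the derived surface $S'(i)$.  The base case $k = 1$ is essentially the definition (\ref{eq:standard1}): writing $\lambda_{\ibar 1} = n_{\ibar}/m_{\ibar}$ with $\gcd(n_{\ibar}, m_{\ibar}) = 1$ exactly matches the claim, since the product $m'_{\ibar}(i) \cdots m_{\ibar}^{(k-1)}(i)$ of trailing denominators is empty when $k=1$.

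For the inductive step ($k \geq 2$), I would simply solve the formula (\ref{transverse}) for $\lambda_{\ibar k}$:
\begin{equation*}
\lambda_{\ibar k} \;=\; \frac{\lambda'_{\ibar,\,k-1}(i)}{m_{\ibar}} \;+\; \lambda_{\ibar 1}(1 - \dt).
\end{equation*}
Since $S'(i)$ is again a prototype and its own iterated derivations are $S''(i), S^{(3)}(i), \ldots$, the denominator $m_{\ibar}^{(j)}$ computed \emph{relative to $S'(i)$} equals $m_{\ibar}^{(j+1)}(i)$ computed relative to $S$. Applying the inductive hypothesis to $S'(i)$ at index $k - 1$, we may therefore write
\begin{equation*}
\lambda'_{\ibar,\,k-1}(i) \;=\; \frac{N'}{m'_{\ibar}(i) \, m_{\ibar}^{(2)}(i) \cdots m_{\ibar}^{(k-1)}(i)},
\qquad \gcd\!\bigl(N',\, m_{\ibar}^{(k-1)}(i)\bigr) = 1.
\end{equation*}

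Substituting $\lambda_{\ibar 1} = n_{\ibar}/m_{\ibar}$ and combining over the common denominator $m_{\ibar} \cdot m'_{\ibar}(i) \cdots m_{\ibar}^{(k-1)}(i)$, the numerator of $\lambda_{\ibar k}$ becomes
\begin{equation*}
N \;=\; N' \,+\, n_{\ibar}(1 - \dt)\, m'_{\ibar}(i)\, m_{\ibar}^{(2)}(i) \cdots m_{\ibar}^{(k-1)}(i).
\end{equation*}
Because $k \geq 2$, the second summand carries a factor of $m_{\ibar}^{(k-1)}(i)$, so $N \equiv N' \pmod{m_{\ibar}^{(k-1)}(i)}$, and the required coprimality is inherited from the inductive hypothesis.

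The only (mild) obstacle is the re-indexing check that the denominators of the leading exponents really do shift as claimed when one swaps $S$ for $S'(i)$; this is purely bookkeeping, following by unwinding the definitions (\ref{eq:basick}) and (\ref{eq:standardk}) with $S'(i)$ as the base surface. Once that identification is made, the induction is essentially a one-line computation.
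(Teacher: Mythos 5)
Your proof is correct and rests on the same mechanism as the paper's: an induction along the derivation tower driven by the first formula of (\ref{eq:basic}), with the decisive point being that the correction term $n_{\ibar}(1-\dt)\,m'_{\ibar}(i)\cdots m_{\ibar}^{(k-1)}(i)$ is divisible by $m_{\ibar}^{(k-1)}(i)$, so the coprimality of the numerator is inherited. The only difference is one of direction — the paper pushes the entire exponent sequence forward through the derivation in the normalized form $\alpha_j/(\beta_1\cdots\beta_j)$ and identifies $\beta_j = m_{\ibar}^{(j-1)}(i)$ along the way, while you invert a single derivation step and pull the statement back from $S'(i)$ — but both arguments turn on the same congruence.
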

\begin{proof}
Write these exponents as follows: 
\begin{equation}
(\lambda_{\ibar k}, \lambda_{\ibar 2}, \lambda_{\ibar 3}, \dots, \lambda_{\ibar e})
=
\left(\frac{\alpha_1}{\beta_1}, \frac{\alpha_2}{\beta_1 \beta_2}, \frac{\alpha_3}{\beta_1 \beta_2 \beta_3},
\dots,\frac{\alpha_e}{\beta_1 \beta_2 \dots \beta_e}\right),
\end{equation}
where each numerator $\alpha_i$ is relatively prime to $\beta_i$.
By definition we have $\beta_1=m_{\ibar}$.
Applying the top formula of (\ref{eq:basic}) and observing that
$m_{\ibar}\lambda_{\ibar 1}=\beta_1 \lambda_{\ibar 1}$ is an integer,
we see that
\begin{equation}
(\lambda'_{\ibar 1}(i),\lambda'_{\ibar 2}(i), \dots, \lambda'_{\ibar,e-1}(i))
=
\left(\frac{\alpha'_2}{\beta_2}, \frac{\alpha'_3}{\beta_2 \beta_3}, 
\dots,\frac{\alpha'_{e}}{\beta_2 \beta_3 \dots \beta_{e}}\right),
\end{equation}
where again each $\alpha'_i$ is relatively prime to $\beta_i$.
This shows that $\beta_2=m'_{\ibar}(i)$.
Similarly, next apply the top formula of (\ref{eq:basick}) with $k=1$,
observing that 
$m_{\ibar}^{(1)}(i)\lambda'_{\ibar 1}=\beta_2 \lambda'_{\ibar 1}$ is an integer,
to see that
\begin{equation}
(\lambda^{(2)}_{\ibar 1}(i),\lambda^{(2)}_{\ibar 2}(i), \dots, \lambda^{(2)}_{\ibar,e-2}(i))
=
\left(\frac{\alpha^{(2)}_3}{\beta_3}, \frac{\alpha^{(2)}_4}{\beta_3 \beta_4}, 
\dots,\frac{\alpha^{(2)}_{e}}{\beta_3 \beta_4 \dots \beta_{e}}\right),
\end{equation}
with each $\alpha^{(2)}_i$  relatively prime to $\beta_i$,
and deduce that $\beta_3=m^{(2)}_{\ibar}(i)$.
Continue in this fashion to conclude that in general $\beta_k=m^{(k-1)}_{\ibar}(i)$.
\end{proof}
\par
The quantity in (\ref{denominator}) has a geometric meaning.
Consider again the surface $\tilde{S}$ defined by the shortened series
(\ref{shortened}).
A component of its transverse slice has Puiseux series
\begin{equation}
\sum_{j=1}^{k} x_{\ibar}^{\lambda_{\ibar j}}
\end{equation}
and this curve has degree 
$
m_{\ibar}  \, m'_{\ibar}(i) \, m_{\ibar}^{(2)}(i) \cdots m_{\ibar}^{(k-1)}(i).
$
\begin{theorem} \label{divides}
The degree of $\tilde{S}$ is $d=\dt \dt' \dt^{(2)} \cdots \dt^{(k-1)}$,
and this quantity divides 
\begin{equation} \label{productdegree}
\left(m_{1}  \, m'_{1}(2) \, m_{1}^{(2)}(2) \cdots m_{1}^{(k-1)}(2)\right)
\cdot
\left(m_{2}  \, m'_{2}(1) \, m_{2}^{(2)}(1) \cdots m_{2}^{(k-1)}(1)\right).
\end{equation}
\end{theorem}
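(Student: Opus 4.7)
The first claim is immediate: formula (\ref{degreeS}) applied to $\tilde{S}$, which has $k$ essential terms rather than $e$, gives exactly $d = \dt\dt'\dt^{(2)}\cdots\dt^{(k-1)}$.

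For the divisibility, my plan is to exhibit an explicit polynomial parameterization $\phi\colon\CC^2\to\tilde{S}$ and then compute its degree by composition with the projection $\pi\colon\tilde{S}\to\CC^2$. Write $P_1 = m_1 m'_1(2)\cdots m_1^{(k-1)}(2)$ and $P_2 = m_2 m'_2(1)\cdots m_2^{(k-1)}(1)$ for the two factors in (\ref{productdegree}). The crucial input will be Theorem \ref{interpretm}: applied with $i = 2$ it says that each exponent $\lambda_{1j}$ (for $1 \le j \le k$) may be written as a fraction whose denominator divides $P_1$, and applied with $i = 1$ it gives the analogous statement for each $\lambda_{2j}$ with denominator dividing $P_2$. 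Consequently, under the substitution $x_1 = u^{P_1}$, $x_2 = v^{P_2}$, the series $\tilde{\zeta}$ becomes an honest polynomial
\[
\hat\zeta(u, v) \;=\; \sum_{j=1}^k u^{P_1\lambda_{1j}}\,v^{P_2\lambda_{2j}},
\]
and I set $\phi(u, v) = (u^{P_1},\,v^{P_2},\,\hat\zeta(u, v))$.

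Each image point lies on $\tilde{S}$, since its $y$-coordinate is by construction one of the conjugate branches of $\tilde{\zeta}$ over the point $(u^{P_1},v^{P_2})\in\CC^2$. Because $\tilde{S}$ is irreducible of dimension $2$, and the image of $\phi$ is $2$-dimensional and contained in $\tilde{S}$, the map $\phi$ is dominant onto $\tilde{S}$. The composition $\pi\circ\phi\colon\CC^2\to\CC^2$, $(u, v)\mapsto(u^{P_1},v^{P_2})$, clearly has degree $P_1 P_2$, while $\pi$ has degree $d$ by the first part of the theorem. By multiplicativity of degrees, the generic fiber of $\phi$ has cardinality $P_1 P_2/d$, which must be a positive integer; equivalently, the finite abelian group $\mu_{P_1}\times\mu_{P_2}$ acts on the preimage $(\pi\circ\phi)^{-1}(x_1,x_2)$ with transitive orbits on the $d$ branches and a stabilizer of size $P_1P_2/d$. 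Either way, $d$ divides $P_1 P_2$, which is exactly the product (\ref{productdegree}).

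The one nontrivial step is the denominator bound that turns $\tilde\zeta$ into a polynomial after the substitution; this is exactly what Theorem \ref{interpretm} supplies. Once that input is in hand, the degree-composition argument is routine and requires no further computation.
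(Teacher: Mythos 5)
Your proof is correct, and it packages the divisibility argument differently from the paper. The paper's proof simply cites Lipman's (5.9.4) and sketches the algebraic version of the same counting: every conjugate of $\tilde\zeta$ is obtained by independently conjugating the two transverse slice curves, so the product of $y-\tilde\zeta^\sigma$ over all $P_1P_2$ combinations is a polynomial of degree $P_1P_2$ in $y$ that must be a power of the irreducible defining function $f$, whence $d\mid P_1P_2$. You instead build the explicit parametrization $\phi(u,v)=(u^{P_1},v^{P_2},\hat\zeta(u,v))$ and factor the degree-$P_1P_2$ covering $\pi\circ\phi$ through the degree-$d$ covering $\pi$, concluding by multiplicativity of degrees in the tower $\CC(u,v)\supseteq\CC(\tilde S)\supseteq\CC(x_1,x_2)$. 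Both arguments rest on exactly the same input, namely the denominator bound from Theorem \ref{interpretm} (which, as you use it, should be read off for each $j\le k$ from the proof of that theorem or by applying it to the length-$j$ truncations, since the statement as written only addresses the last exponent); your version has the advantage of being self-contained rather than deferring to Lipman, and of making the irreducibility hypothesis (needed for $\phi$ to be dominant, equivalently for the conjugate product to be a \emph{power} of $f$) explicit. One small caveat: the closing remark about $\mu_{P_1}\times\mu_{P_2}$ acting transitively on the $d$ branches is not needed for the degree count and would itself require the irreducibility argument to justify; the function-field version of your argument is the clean one.
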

\begin{proof}
The first equation is simply (\ref{degreeS}) applied to $\tilde{S}$. The
other statement is (5.9.4) of \cite{Li2}. Here is the essence of Lipman's argument.
As explained in our introduction, one obtains the defining function for the
quasi-ordinary surface $\tilde{S}$ by taking
a product over all conjugates of the branch (\ref{shortened}).
One can obtain all such conjugates by independently taking all possible conjugates
for both transverse slice curves (\ref{transverserec}) and forming all
possible combinations. This will give an equation of the degree given in (\ref{productdegree}).
There is likely to be redundancy, however, since two different choices of conjugates
for the curves may lead to the same branch of the surface. Thus one obtains
a power of the defining function.
\end{proof}

To continue our comparison of the two possible diagrams in (\ref{basicdiagram}), we make an
elementary observation in linear algebra,
recording it as Lemma \ref{simple}.
Given a matrix
\begin{equation}
U =
\begin{bmatrix}
U_{11} & U_{12} \\ 
U_{21} & U_{22}
\end{bmatrix} 
\quad \text{with } U_{22} \neq 0,
\end{equation}
denote its determinant by $\Delta$ and let
\begin{equation} \label{defineswap}
\sw(U) =
\begin{bmatrix}
\Delta/U_{22} & U_{12}/U_{22}\\ 
-U_{21}/U_{22} & 1/U_{22} 
\end{bmatrix},
\end{equation}
a matrix with determinant $U_{11}/U_{22}$.
(We have chosen notation to suggest the word ``swap.")
Let $x(1)$, $x(2)$, $y(1)$, $y(2)$ be indeterminates.
\begin{lemma}  \label{simple} 
These two equations are equivalent:
\begin{equation}
\begin{bmatrix}
x(1) \\ 
y(2)
\end{bmatrix}
= U
\begin{bmatrix}
x(2) \\ 
y(1)
\end{bmatrix}
\quad \Leftrightarrow \quad
\begin{bmatrix}
x(1) \\ 
y(1)
\end{bmatrix}
= \sw(U)
\begin{bmatrix}
x(2) \\ 
y(2)
\end{bmatrix}.
\end{equation}
Furthermore $\det (\sw(U))=1$ if and only if $U_{11}=U_{22}$. \qedwhite
\end{lemma}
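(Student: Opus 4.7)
The plan is to prove the equivalence by direct linear algebra, using the hypothesis $U_{22} \neq 0$ to solve the relevant scalar equation for the remaining unknown.

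First I would expand the left-hand matrix equation into the two scalar equations
\[
x(1) = U_{11} x(2) + U_{12} y(1), \qquad y(2) = U_{21} x(2) + U_{22} y(1),
\]
and use the second equation --- permissible because $U_{22} \neq 0$ --- to solve for
\[
y(1) = -\frac{U_{21}}{U_{22}} x(2) + \frac{1}{U_{22}} y(2).
\]
This is already the bottom row of the right-hand system with the matrix $\sw(U)$ from (\ref{defineswap}). Substituting this expression back into the first scalar equation and collecting coefficients, the coefficient of $x(2)$ collapses to $(U_{11}U_{22} - U_{12}U_{21})/U_{22} = \Delta/U_{22}$ while the coefficient of $y(2)$ is $U_{12}/U_{22}$, reproducing the top row of $\sw(U)$.

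For the reverse implication I would run the same argument in mirror image: starting from the right-hand system, solve its bottom row for $y(2)$ --- legitimate since the coefficient $1/U_{22}$ is nonzero --- and substitute into the top row to recover the left-hand system with matrix $U$. For the determinant claim, the identity $\det(\sw(U)) = U_{11}/U_{22}$ is already recorded immediately after the definition (\ref{defineswap}), so the biconditional reduces to the trivial observation that a quotient equals one exactly when its numerator and denominator agree.

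I expect no real obstacle here, since the content of the lemma lies essentially in the definition of $\sw$, and the verification is a short, direct computation. The only point requiring minor care is tracking the sign when rearranging the second equation for $y(1)$, so that one recovers the precise $-U_{21}/U_{22}$ entry specified in (\ref{defineswap}); and noting that the equivalence never requires $\sw(U)$ itself to be invertible, only that $U_{22} \neq 0$.
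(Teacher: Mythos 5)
Your proof is correct and is exactly the elementary computation the paper has in mind; the paper in fact omits the argument entirely (the lemma is stated with an immediate \qedwhite as an ``elementary observation in linear algebra''), and your verification --- solving the second scalar equation for $y(1)$ via $U_{22}\neq 0$, back-substituting to recover the rows of $\sw(U)$, and reducing the determinant claim to the recorded identity $\det(\sw(U))=U_{11}/U_{22}$ --- fills it in correctly.
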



As before, we use shorthand notation for finite sequences, e.g.
\begin{equation}
\begin{aligned}
\llambda_{\ibar} &
= \{ \lambda_{\ibar j} \}_{j=1}^{e}, \\
\llambda'_{\ibar} (i) &
= \{ \lambda'_{\ibar j}(i) \}_{j=1}^{e-1}, \\
\llambda^{(k)}_{\ibar} (i) &
= \{ \lambda^{(k)}_{\ibar j}(i) \}_{j=1}^{e-k}, \\
&\text{etc.}
\end{aligned}
\end{equation}

\begin{theorem}\label{matrixeq} 
For each $k$ from 1 to $e-1$,
there exists an $\SL(2, \mathbb{Z})$ matrix $M^{(k)}$ such that
$$
\begin{bmatrix}
\llambda^{(k)}_{1}(1) \\ 
\llambda^{(k)}_{2}(1)
\end{bmatrix}
= M^{(k)}
\begin{bmatrix}
\llambda^{(k)}_{1}(2) \\ 
\llambda^{(k)}_{2}(2)
\end{bmatrix} .
$$
\end{theorem}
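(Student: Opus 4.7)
The plan is to prove the theorem by induction on $k$.

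\textbf{Base case ($k=1$).} Writing out (\ref{eq:basicshort}) for both $i=1$ and $i=2$, I would first solve the two formulas $\llambda'_{\ibar}(i) = m_{\ibar}\,T(\llambda_{\ibar})$ for the common intermediate sequences $T(\llambda_1)$ and $T(\llambda_2)$, then substitute back into the two formulas for $\llambda'_{i}(i)$. After collecting terms this yields
$$
\begin{bmatrix} \llambda'_1(1) \\ \llambda'_2(2) \end{bmatrix}
= U
\begin{bmatrix} \llambda'_1(2) \\ \llambda'_2(1) \end{bmatrix}
$$
for an explicit $2\times 2$ rational matrix $U$. Using $b_i=\dt/m_{\ibar}$, one checks at once that $U_{11}=U_{22}=\dt/(m_1m_2)=1/\gcd(m_1,m_2)$, so Lemma~\ref{simple} applies and produces $M^{(1)}:=\sw(U)$ of determinant $1$. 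Its four entries simplify to $r_1n_1$, $-r_2n_2$, $\gcd(m_1,m_2)$, and $(1-r_1r_2n_1n_2)/\gcd(m_1,m_2)$. The first three are manifestly integer, and the fourth is integer because the defining relation $m_{\ibar}s_i-n_{\ibar}r_i=1$ forces $r_in_{\ibar}\equiv -1\pmod{\gcd(m_1,m_2)}$, hence $r_1r_2n_1n_2\equiv 1\pmod{\gcd(m_1,m_2)}$.

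\textbf{Inductive step.} Suppose $M^{(k)}\in\SL(2,\ZZ)$. By Theorem~\ref{equald} the common value $\dt^{(k)}:=\dt^{(k)}(1)=\dt^{(k)}(2)$ makes $T^{(k)}$ a single linear operator independent of $i$; by linearity, the inductive hypothesis gives $T^{(k)}(\llambda^{(k)}(1))=M^{(k)}\,T^{(k)}(\llambda^{(k)}(2))$. The formulas (\ref{eq:basickshort}) have the form $\llambda^{(k+1)}(i)=A_i^{(k)}\,T^{(k)}(\llambda^{(k)}(i))$ for a $2\times 2$ rational matrix $A_i^{(k)}$ of determinant $\dt^{(k)}$; hence $M^{(k+1)}:=A_1^{(k)}M^{(k)}(A_2^{(k)})^{-1}$ has determinant $1$. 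To check integrality I would reprise the base-case trick at level $k+1$: after using the inductive hypothesis to re-express all intermediate quantities in terms of $\llambda^{(k+1)}(1)$ and $\llambda^{(k+1)}(2)$, rearrange into a relation of the same swap-lemma form as above with equal diagonal entries, and conclude with Lemma~\ref{simple}.

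\textbf{Main obstacle.} The delicate part is the entry-wise integrality of $M^{(k+1)}$; its determinant is forced to be $1$, but integrality of each entry requires level-$k$ analogs of the congruence $r_in_{\ibar}\equiv -1\pmod{\gcd(m_1,m_2)}$ used in the base case. These should follow from the determinantal defining equation (\ref{eq:torick}), but the bookkeeping is complicated by the fact that the quantities $m_i^{(k)}(j)$ and $n_i^{(k)}(j)$ depend on the choice of $j\in\{1,2\}$. I expect Theorem~\ref{interpretm} (which identifies denominators across levels in terms of original Puiseux data) and Corollary~\ref{twolcms} (which identifies $\dt^{(k)}$ with a common lcm value) to be the key tools allowing the base-case arithmetic to propagate up the tower.
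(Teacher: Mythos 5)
Your base case is correct and matches the paper's: you eliminate $T(\llambda_1)$, $T(\llambda_2)$ to get the ``crossed'' matrix $U$ with equal diagonal entries $\dt/(m_1m_2)$, apply Lemma~\ref{simple}, and check the four entries of $\sw(U)$ directly (your congruence $r_1r_2n_1n_2\equiv 1 \pmod{\gcd(m_1,m_2)}$ is a clean way to handle the top-left entry). Your observation that $M^{(k+1)}=A_1^{(k)}M^{(k)}(A_2^{(k)})^{-1}$ with $\det A_i^{(k)}=\dt^{(k)}$ is also a nice, and slightly slicker, way to get $\det M^{(k+1)}=1$ than the paper's route through equal diagonal entries of $U^{(k+1)}$.

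The gap is exactly where you flag it: the integrality of the entries of $M^{(k+1)}$ is not proved, only hoped for, and the tools you name are not sufficient. In particular, the diagonal entry $M^{(k+1)}_{22}$ works out to
$\bigl(\prod_j \dt^{(j)}\bigr)\big/\bigl(\prod_j m_1^{(j)}\cdot\prod_j m_2^{(j)}\bigr)$ inverted, i.e.\ the reciprocal of (\ref{lipmandiv}), and its integrality is \emph{not} an elementary congruence propagated from the base case (where that entry was just $\gcd(m_1,m_2)$): it is precisely Theorem~\ref{divides}, Lipman's divisibility statement (5.9.4) relating the degree of the surface to the product of the degrees of its two transverse-slice curves. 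That external input does not appear anywhere in your plan. For the remaining entries the paper does not ``reprise the base-case trick'' either; it derives explicit recursions, e.g.\ $M^{(k+1)}_{12}=s_1 m_1(2)M^{(k)}_{12}+r_1 n_1(2)$ (formula (\ref{m12})), by combining the $U$-recursion (\ref{urecursion}) with the leading-exponent identity (\ref{whiteboard}) and the determinant relations (\ref{eq:torick}); integrality of $M^{(k+1)}_{11}$ then needs Corollary~\ref{twolcms} so that $\dt\cdot n_i(j)/m_i(j)$ is an integer. So the skeleton of your induction is right, but the inductive step as written establishes only $M^{(k+1)}\in\SL(2,\mathbb{Q})$; closing it to $\SL(2,\ZZ)$ requires Theorem~\ref{divides} plus the explicit entry recursions, neither of which your proposal supplies.
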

\begin{proof}

\par
Write out explicitly the equations in (\ref{eq:basicshort}), using first the choice $(i,\ibar)=(1,2)$ and then the choice $(i,\ibar)=(2,1)$:
\begin{equation}
\begin{aligned}
\llambda'_{2} (1) &
= m_{2} \,T(\llambda_{2}) \\
\llambda'_{1} (1) &
= b_{1} \,T(\llambda_{1}) + b_{1}r_{1} \lambda_{11} \, \llambda'_{2} (1) \\
\llambda'_{1} (2) &
= m_{1} \,T(\llambda_{1}) \\
\llambda'_{2} (2) &
= b_{2} \,T(\llambda_{2}) + b_{2}r_{2} \lambda_{21}\, \llambda'_{1} (2).
\end{aligned}
\end{equation}
Eliminating $T(\llambda_{1})$ and $T(\llambda_{2})$,
we find this system of two equations:
\begin{equation}
\begin{aligned}
\llambda'_{1} (1) &
=\frac{b_1}{m_1} \llambda'_{1} (2) + b_{1}r_{1} \lambda_{11}\llambda'_{2} (1) \\
\llambda'_{2} (2) &
=\frac{b_2}{m_2} \llambda'_{2} (1) + b_{2}r_{2} \lambda_{21}\llambda'_{1} (2) \\ 
\end{aligned}
\end{equation}
and write it as a matrix equation
\begin{equation}
\begin{bmatrix}
\llambda'_{1}(1) \\ 
\llambda'_{2}(2)
\end{bmatrix}
= U'
\begin{bmatrix}
\llambda'_{1}(2) \\ 
\llambda'_{2}(1)
\end{bmatrix}
\end{equation}
in which
\begin{equation} \label{udef}
U' =
\begin{bmatrix}
b_1/m_1 & b_{1}r_{1} \lambda_{11} \\ 
 b_{2}r_{2} \lambda_{21} & b_2/m_2 
\end{bmatrix} .
\end{equation}
Observe that $b_1/m_1=\dt/(m_1 m_2)=b_2/m_2$.
Thus by Lemma \ref{simple}, the matrix $M'=\sw(U')$ has determinant 1.
\par
To show that $M'$
is the required matrix for the statement
of the theorem when $k=1$,
we need to establish that its entries are integers.
The bottom right entry of $M'$ is the integer
$m_2/b_2=(m_1 m_2)/\dt$.
By (\ref{defb}), we see that $b_{1}m_{2}=b_{2}m_{1}$,
and together with (\ref{eq:standard1}) this implies that the top right entry
is the integer $n_{1}r_{1}$; similarly the bottom left entry is $-n_{2}r_{2}$.
Again using  (\ref{defb}) and (\ref{eq:standard1}),
we see that the determinant of $U'$
is
\begin{equation}
\begin{aligned}
\Delta 
& = \frac{b_1}{m_1} \cdot \frac{b_2}{m_2} 
- b_{1}b_{2}r_{1}r_{2}\lambda_{11}\lambda_{21} \\
& = (\dt)^{2} \left( 1- m_{1}m_{2}r_{1}r_{2}\lambda_{11}\lambda_{21}\right) \\
& = (\dt)^{2} \left( 1- n_{1}n_{2}r_{1}r_{2}\right). 
\end{aligned}
\end{equation}
Since $b_2$ divides $\dt$, the top diagonal entry $m_2 \Delta/b_2$ 
of $M'$ is an integer. 

\par
We have thus established the base case in an inductive proof
of the theorem. 
Hypothesize inductively that
we have found a matrix
\begin{equation}
U^{(k)}
=\begin{bmatrix}
U^{(k)}_{11} & U^{(k)}_{12} \\ 
 U^{(k)}_{21} & U^{(k)}_{22} 
\end{bmatrix}
 \end{equation}
 
for which
\begin{equation} \label{uequation}
\begin{bmatrix}
\llambda^{(k)}_{1}(1) \\ 
\llambda^{(k)}_{2}(2)
\end{bmatrix}
= U^{(k)}
\begin{bmatrix} 
\llambda^{(k)}_{1}(2) \\ 
\llambda^{(k)}_{2}(1)
\end{bmatrix},
\end{equation}
having identical diagonal entries, and such that 
\begin{equation} \label{mk}
M^{(k)}=\sw(U^{(k)})
\end{equation}
has integer entries.
To simplify notations further, we suppress all superscripts $(k)$, and we replace each superscript
$(k+1)$ by a plus sign.
Thus, for example, we write equation (\ref{uequation}) as
\begin{equation} \label{uequation2}
\begin{bmatrix} 
\llambda_{1}(1) \\ 
\llambda_{2}(2)
\end{bmatrix} 
= U
\begin{bmatrix}
\llambda_{1}(2) \\ 
\llambda_{2}(1)
\end{bmatrix} ,
\end{equation}
and we aim to prove that there is an appropriate matrix $U^+$ for which
\begin{equation} \label{uequation+}
\begin{bmatrix}
\llambda^+_{1}(1) \\ 
\llambda^+_{2}(2)
\end{bmatrix}
= U^+
\begin{bmatrix}
\llambda^+_{1}(2) \\ 
\llambda^+_{2}(1)
\end{bmatrix}.
\end{equation}

In the second equation of (\ref{eq:basickshort}), let $(i,\ibar)=(1,2)$;
with our simplified notation, this says that 
\begin{equation}
\llambda^+_{1} (1) 
= b_{1} \,T(\llambda_{1}(1)) 
+ b_{1}r_{1} \lambda_{11}(1) \, \llambda^+_{2} (1).
\end{equation}
(Note that $b_1$ and $r_1$ mean $b^{(k)}_{1}$
and $r^{(k)}_{1}$, etc.)

Apply the inductive hypothesis, and then use the first equation of (\ref{eq:basickshort})
twice:
\begin{equation}
\begin{aligned}
\llambda^+_{1} (1) 
& = 
b_{1} \,T
\left(
U_{11}
\llambda_{1}(2)
+
U_{12}
\llambda_{2}(1)
\right) 
+ 
b_{1}r_{1} \lambda_{11}(1) \, \llambda^+_{2} (1) \\
& =
b_{1} U_{11} T(\llambda_{1}(2))
+
b_{1} U_{12} T(\llambda_{2}(1))
+ b_{1}r_{1} \lambda_{11}(1) \, \llambda^+_{2} (1) \\
& =
\frac{b_{1}}{m_{1}(2)} \, U_{11}\llambda^+_{1} (2) 
+
\frac{b_{1}}{m_{2}(1)} \, U_{12} \llambda^+_{2} (1) 
+ 
b_{1}r_{1} \lambda_{11}(1) \, \llambda^+_{2} (1) \\
& =
\frac{b_{1}}{m_{1}(2)}  \, U_{11} \llambda^+_{1} (2) 
+
\left( \frac{b_{1}}{m_{2}(1)} \, U_{12}  
+ 
b_{1}r_{1} \lambda_{11}(1) \right) \llambda^+_{2} (1).
\end{aligned}
\end{equation}
An identical calculation using the opposite choice $(i,\ibar)=(2,1)$
establishes that
\begin{equation}
\llambda^+_{2} (2) 
 = 
\frac{b_{2}}{m_{2}(1)}  \, U_{22} \llambda^+_{2} (1) 
+
\left( \frac{b_{2}}{m_{1}(2)} \, U_{21}  
+ 
b_{2}r_{2} \lambda_{21}(2) \right) \llambda^+_{1} (2).
\end{equation}

These calculations show that $U^+$ exists and give a recursion for
its entries:
\begin{equation} \label{urecursion}
U^+
 =
\begin{bmatrix} 
U^+_{11} & U^+_{12} \\ 
U^+_{21} & U^+_{22} 
\end{bmatrix} 
 =
\begin{bmatrix} 
\frac{b_{1}}{m_{1}(2)}  \, U_{11} 
& 
\frac{b_{1}}{m_{2}(1)} \, U_{12}  
+ 
b_{1}r_{1} \lambda_{11}(1) \\ 
\frac{b_{2}}{m_{1}(2)} \, U_{21}  
+ 
b_{2}r_{2} \lambda_{21}(2) 
& 
\frac{b_{2}}{m_{2}(1)}  \, U_{22}
\end{bmatrix}.
\end{equation}

By the inductive hypothesis we know that $U_{11}=U_{22}$.
The second part of Theorem~\ref{equald} and equation (\ref{defbk}) imply that
\begin{equation} \label{bmswap}
b_{1}m_{2}(1)=b_{2}m_{1}(2);
\end{equation}
thus the diagonal entries of $U^+$ are equal.
In fact the recursive formula for the bottom right entry, together with 
(\ref{defbk}),
give us an explicit formula for these diagonal entries:
\begin{equation} \label{lipmandiv}
\begin{aligned}
U^{+}_{22} 
&=
\frac{b_2}{m_2} \cdot
\frac{b'_{2}}{m'_{2}(1)} \cdot
\frac{b_{2}^{(2)}}{m_{2}^{(2)}(1)} \cdots
\frac{b_{2}^{(k)}}{m_{2}^{(k)}(1)} \\
&=
\frac
{\dt^{(0)}(i) \cdot
\dt^{(1)}(i) \cdot
\dt^{(2)}(i) \cdots
\dt^{(k)}(i)}
{m_1 \cdot
m'_{1}(1) \cdot
m_{1}^{(2)}(1) \cdots
m_{1}^{(k)}(1) \cdot
m_2 \cdot
m'_{2}(1) \cdot
m_{2}^{(2)}(1) \cdots
m_{2}^{(k)}(1)} \, .
\end{aligned}
\end{equation}
\par
(In this display, we have momentarily restored all superscripts, but 
will continue to suppress them in what follows.)
Finally we must show that the entries of 
\begin{equation} \label{mplus}
M^+
 =\
\begin{bmatrix}
M^+_{11} & M^+_{12} \\ 
M^+_{21} & M^+_{22} 
\end{bmatrix}
=\sw(U^{+})
\end{equation}
are integers.

By (\ref{defineswap}), $M^+_{22}$ is the reciprocal
of the quantity in (\ref{lipmandiv}), and Theorem \ref{divides}
says that this is an integer.
To show that the other entries of $M^+$ are integers, we first
establish the following recursive formulas for its entries:
\begin{equation} \label{m11}
M^+_{11} = m_1(2) b_2 s_1 s_2 M_{11} 
   - \dt \left( r_1 s_2 \frac{n_2(2)}{m_2(2)} + r_2 s_1 \frac{n_1(1)}{m_1(1)} \right)
\end{equation}
\begin{equation} \label{m12}
M^+_{12} = s_1 m_1(2) M_{12} + r_1 n_1(2)
\end{equation}
\begin{equation} \label{m21}
M^+_{21} = s_2 m_2(1) M_{21} - r_2 n_2(1)
\end{equation}
\begin{equation} \label{m22}
M^+_{22} = \frac{m_{2}(1)}{b_{2}}  \, M_{22} = \frac{m_{1}(2)}{b_{1}}  \, M_{22}
\end{equation}
Formula (\ref{m22}) is immediate from the diagonal (and equal) entries of (\ref{urecursion}).

\par

To derive formula (\ref{m12}),
we first combine the leading component of the vector equation (\ref{uequation}) with formula (\ref{eq:standardk}) to obtain
\begin{equation} \label{whiteboard}
\lambda_{11}(1) = \frac{n_1(1)}{m_1(1)} = U_{11} \frac{n_1(2)}{m_1(2)} + U_{12} \frac{n_2(1)}{m_2(1)}\,.
\end{equation}


Here is our derivation of formula (\ref{m12}):
\begin{equation*}
\begin{alignedat}{2}
M^+_{12} = \frac{U^+_{12}}{U^+_{22}} &=
	\frac{\frac{b_{1}}{m_{2}(1)} \, U_{12}  
	+ 
	b_{1}r_{1} \lambda_{11}(1)}{U^+_{22}}
	\qquad &\text{by (\ref{urecursion})} \\
&=\frac{\frac{b_{1}}{m_{2}(1)} \, U_{12}  
	+ 
	b_{1}r_{1} 
	\left(U_{11}\frac{n_1(2)}{m_1(2)} + U_{12}\frac{n_2(1)}{m_2(1)}\right)}
	{U^+_{22}}
	\qquad &\text{by (\ref{whiteboard})} \\
&=\frac{\frac{b_{1}}{m_{2}(1)} \, U_{12}
	\left( 1+r_1 n_2(1) \right)
	+ 
	b_{1}r_{1} 
	U_{11}\frac{n_1(2)}{m_1(2)} }
	{U^+_{22}} \\
&=\frac{b_{1} s_{1} \, U_{12}}
	{U^+_{22}}
	+
	\frac{
	r_{1} n_1(2)
	\frac{b_{1}U_{11}}{m_1(2)} }
	{U^+_{22}}
	\qquad &\text{by (\ref{eq:torick})} \\	
&=\frac{b_{1} s_{1} \, U_{12}}
	{\frac{b_1}{m_1(2)}U_{22}}
	+
	r_{1} n_1(2)
	\qquad &\text{by (\ref{bmswap}), using that $U^+_{11}=U^+_{22}$} \\
&=s_1 m_1(2) M_{12}
	+
	r_{1} n_1(2) \\
\end{alignedat}
\end{equation*}
A completely parallel derivation yields formula (\ref{m21}).
\par
The derivation of formula (\ref{m22}) is more elaborate.
Formula (\ref{whiteboard}) yields
\begin{equation} \label{part}
\frac{m_2(1) m_1(2) n_1(1)}{m_1(1)} \, M_{22} 
= m_2(1) n_1(2) + M_{12} \, m_1(2) n_2(1)
\end{equation}
and a completely parallel process yields its counterpart
\begin{equation} \label{counterpart}
\frac{m_1(2) m_2(1) n_2(2)}{m_2(2)} \, M_{22} 
= - M_{21} \, m_2(1) n_1(2) + m_1(2) n_2(1).
\end{equation}
We next observe, using (\ref{eq:torick}) twice, that
\begin{equation*}
1= \det
\begin{bmatrix} m_{1}(2) & n_{1}(2) \\ r_{2} & s_{2} \end{bmatrix}
\cdot
\det
\begin{bmatrix} m_{2}(1) & n_{2}(1) \\ r_{1} & s_{1} \end{bmatrix}.
\end{equation*}
Thus, using first formulas (\ref{m12}) and (\ref{m21}), and then 
formulas(\ref{part}) and (\ref{counterpart}),
\begin{equation*}
\begin{aligned}
M^+_{12} & M^+_{21} + 1 \\
&=
\left( s_1 m_1(2) M_{12} + r_1 n_1(2) \right)
\left( s_2 m_2(1) M_{21} - r_2 n_2(1) \right) \\
& \quad +
\left( s_2 m_1(2) - r_2 n_1(2) \right)
\left( s_1 m_2(1) - r_1 n_2(1) \right) \\
&=
\, m_1(2) m_2(1) s_1 s_2 \left( M_{12}M_{21}+1 \right) \\
& \quad - r_1 s_2   \left( -n_1(2) m_2(1) M_{21} + m_1(2)n_2(1)  \right) 
- r_2 s_1   \left( n_2(1)m_1(2) M_{12} + m_2(1)n_1(2)  \right) \\
&=
\, m_1(2) m_2(1) s_1 s_2 \left( M_{12}M_{21}+1 \right)  - m_1(2) m_2(1) M_{22} \left( r_1 s_2 \frac{n_2(2)}{m_2(2)} + r_2 s_1 \frac{n_1(1)}{m_1(1)}  \right).
\end{aligned}
\end{equation*}
Thus
\begin{equation*}
\begin{aligned}
M^{+}_{11} &= \frac{M^+_{12} M^+_{21} + 1}{M^{+}_{22}} \\
&= 
\frac{m_1(2) m_2(1) s_1 s_2 \left( M_{12}M_{21}+1 \right)}
{\frac{m_2(1)}{b_2}M_{22}} \\
& \qquad -
\frac{m_1(2) m_2(1) M_{22} \left( r_1 s_2 \frac{n_2(2)}{m_2(2)} + r_2 s_1 \frac{n_1(1)}{m_1(1)}  \right)}
{\frac{m_1(2)}{b_1}M_{22}} \\
&= 
m_1(2) b_2 s_1 s_2 M_{11} 
- m_2(1)b_1
\left(
r_1 s_2 \frac{n_2(2)}{m_2(2)} + r_2 s_1 \frac{n_1(1)}{m_1(1)}
\right) \\
&= 
m_1(2) b_2 s_1 s_2 M_{11} 
- \dt
\left(
r_1 s_2 \frac{n_2(2)}{m_2(2)} + r_2 s_1 \frac{n_1(1)}{m_1(1)}
\right). \\
\end{aligned}
\end{equation*}
\par
We have already remarked that $M^{+}_{22}$ is an integer.
Formulas (\ref{m12}) and (\ref{m21}) 
show that $M^{+}_{12}$ and $M^{+}_{12}$ are integers, since all the quantities
appearing on the right are integers.
In view of Corollary \ref{twolcms}, formula (\ref{m11}) shows that $M^+_{11}$ is an integer.
\end{proof}

In Theorem \ref{equald}, we have proved that the quantities $\dt^{(k)}(i)$
attached to the surfaces $\St^{(k)}(i)$ of our basic diagram (\ref{basicdiagram}) are
the same, no matter
whether we let $i=1 \text{ or } 2$.
We will conclude this section by proving that the 
same thing is true for the quantities $\ct^{(k)}(i)$.
To do this, we need the following elementary number-theoretic lemma.

\begin{lemma} \label{equalnums}
Suppose the four rational numbers
$\frac{n_1(1)}{m_1(1)}$,
$\frac{n_2(1)}{m_2(1)}$,
$\frac{n_2(2)}{m_2(2)}$,
$\frac{n_1(2)}{m_1(2)}$
are related as follows:
\renewcommand\arraystretch{1.6}
\begin{equation*}
\begin{bmatrix}
\frac{n_1(1)}{m_1(1)} \\
\frac{n_2(1)}{m_2(1)}
\end{bmatrix}
=
M
\begin{bmatrix}
\frac{n_2(2)}{m_2(2)} \\
\frac{n_1(2)}{m_1(2)}
\end{bmatrix}
\end{equation*}
\renewcommand\arraystretch{1}
where $M \in \SL(2, \mathbb{Z})$.
\par
Suppose furthermore that
\begin{equation*}
\lcm(m_1(1),m_2(1)) = \lcm(m_2(2),m_1(2)).
\end{equation*}
Then
\begin{equation*}
\gcd(n_1(1),n_2(1)) = \gcd(n_2(2),n_1(2)).
\end{equation*}
\end{lemma}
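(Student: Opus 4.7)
The plan is to prove the two divisibility relations $g_2 \mid g_1$ and $g_1 \mid g_2$, where $g_1 := \gcd(n_1(1),n_2(1))$ and $g_2 := \gcd(n_2(2),n_1(2))$. The two directions are symmetric: for the reverse divisibility, one simply applies the argument for the forward direction to the inverted system, which is legitimate because $M^{-1} \in \SL(2,\ZZ)$ as well. So I would only write out $g_2 \mid g_1$ in detail.

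For the forward direction, let $d$ denote the common value of the two LCMs, and clear denominators in each component of the given vector equation by multiplying through by $d$. This produces integer identities of the form
$$\frac{d}{m_1(1)} \, n_1(1) \;=\; M_{11}\,\frac{d}{m_2(2)} \, n_2(2) \;+\; M_{12}\,\frac{d}{m_1(2)} \, n_1(2),$$
and analogously for the second component, where all four quotients $d/m_{\bullet}(\bullet)$ are integers. The crucial arithmetic observation is that $\gcd(g_2,d)=1$: since each of the four fractions is in lowest terms (cf.~(\ref{eq:standardk})), $g_2$ divides both $n_2(2)$ and $n_1(2)$ and is therefore coprime to both $m_2(2)$ and $m_1(2)$, hence coprime to their LCM, which is $d$. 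In particular $g_2$ is coprime to the integer factors $d/m_1(1)$ and $d/m_2(1)$ appearing on the left-hand sides of the two cleared equations. Since $g_2$ manifestly divides the right-hand side of each, it must divide $n_1(1)$ and $n_2(1)$, so $g_2 \mid g_1$ as required.

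The main point is to recognize that the equality-of-LCMs hypothesis exists precisely so that a single integer $d$ both clears all four denominators simultaneously and is coprime to $g_2$ (and, by the symmetric argument, to $g_1$). The unimodularity of $M$ enters only through $M^{-1}\in\SL(2,\ZZ)$, which guarantees that the reverse system has integer coefficients and thus supports the same argument with the roles of the two sides interchanged. I do not expect any serious obstacle: the proof requires no computation with the entries of $M$ and no use of formula (\ref{urecursion}) or anything else from the recursion, only the lowest-terms condition plus the two hypotheses of the lemma.
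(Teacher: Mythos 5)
Your proof is correct and follows essentially the same route as the paper: clear denominators using the common value $d$ of the two least common multiples, then use the integrality of $M$ for one divisibility and of $M^{-1}\in\SL(2,\ZZ)$ for the other. The only difference is cosmetic — the paper asserts that multiplying by $d$ "does not change the gcd of the numerators," while you make the underlying reason explicit via the observation $\gcd(g_2,d)=1$ coming from the lowest-terms condition of (\ref{eq:standardk}).
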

\begin{proof}
We can multiply each column vector by the least common multiple of its
denominators, preserving the equation while
not changing the greatest common divisor of its numerator;
this reduces us to the case where all four numbers are integers.
In this case, one argues as follows: since the entries of $M$ are integers,
the $\gcd$ of the vector on the right divides both entries of the vector on the left,
and thus divides its $\gcd$. Since $M$ is in $\SL(2, \mathbb{Z})$, one can reverse
the argument, using $M^{-1}$.
\end{proof}

\begin{theorem} \label{equalc}
For each $k=1,\dots,e-1$, we have $\ct^{(k)}(1)=\ct^{(k)}(2)$ .
\end{theorem}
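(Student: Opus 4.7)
The plan is to feed the $j=1$ component of the vector equation produced by Theorem \ref{matrixeq} into Lemma \ref{equalnums}. First, restrict
$$
\begin{bmatrix}\llambda^{(k)}_{1}(1)\\ \llambda^{(k)}_{2}(1)\end{bmatrix}
= M^{(k)}\begin{bmatrix}\llambda^{(k)}_{1}(2)\\ \llambda^{(k)}_{2}(2)\end{bmatrix}
$$
to its leading entries, and rewrite each $\lambda^{(k)}_{i1}(j)$ in lowest terms as $n^{(k)}_i(j)/m^{(k)}_i(j)$ via (\ref{eq:standardk}). This produces an integer-matrix relation among the four rational numbers that appear in the hypothesis of Lemma \ref{equalnums}, up to the trivial permutation that transposes the two entries of the right-hand column vector; that permutation changes $M^{(k)}$ only by right-multiplication by the $2\times 2$ swap matrix, and the proof of Lemma \ref{equalnums} needs nothing more than that both the resulting matrix and its inverse have integer entries.

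Next, the lcm hypothesis of Lemma \ref{equalnums} is supplied for free by Corollary \ref{twolcms}, which reads
$$
\lcm(m^{(k)}_{1}(1),m^{(k)}_{2}(1)) \;=\; \dt^{(k)} \;=\; \lcm(m^{(k)}_{1}(2),m^{(k)}_{2}(2)).
$$
Invoking Lemma \ref{equalnums} then yields
$$
\gcd(n^{(k)}_{1}(1),n^{(k)}_{2}(1)) \;=\; \gcd(n^{(k)}_{1}(2),n^{(k)}_{2}(2)),
$$
where the symmetry of $\gcd$ absorbs the harmless transposition. By the defining formula (\ref{defck}), this is exactly the required equality $\ct^{(k)}(1)=\ct^{(k)}(2)$.

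There is essentially no obstacle: all the substantive work has been front-loaded into Theorem \ref{matrixeq} (producing the $\SL(2,\mathbb Z)$ matrix linking the two sequences of exponents), Corollary \ref{twolcms} (giving the lcm identity), and Lemma \ref{equalnums} (the elementary number-theoretic core). The only thing that requires a moment of care is matching the index order of the column vectors between the statements of Theorem \ref{matrixeq} and Lemma \ref{equalnums}, but this is purely bookkeeping and does not affect the conclusion.
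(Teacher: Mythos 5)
Your proof is correct and follows essentially the same route as the paper's: the authors likewise apply Lemma \ref{equalnums} to the leading entries of the vector equation of Theorem \ref{matrixeq}, with the lcm hypothesis supplied by Theorem \ref{equald} (equivalently Corollary \ref{twolcms}) and the conclusion read off from (\ref{defck}). Your explicit remark that the transposition of the right-hand column only changes $M^{(k)}$ by a determinant $-1$ factor, which is harmless because the lemma's proof uses only integrality of the matrix and its inverse, is a point the paper passes over silently.
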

\begin{proof}
Apply Lemma \ref{equalnums} using the four rational numbers
$\frac{n_1^{(k)}(1)}{m_1^{(k)}(1)}$,
$\frac{n_2^{(k)}(1)}{m_2^{(k)}(1)}$,
$\frac{n_2^{(k)}(2)}{m_2^{(k)}(2)}$,
$\frac{n_1^{(k)}(2)}{m_1^{(k)}(2)}$.
Equation (\ref{degtruncation}) tells us that the
least common multiples of the denominator pairs are $\dt^{(k)}(1)$ and $\dt^{(k)}(2)$,
and Theorem \ref{equald} asserts that they are equal.
Equation (\ref{defck}) tells us that the greatest common divisors 
of the numerator pairs are
$\ct^{(k)}(1)$ and $\ct^{(k)}(2)$, which are likewise equal. 
\end{proof}


\section{Topological results} \label{topresults}

We now embark on a study of the topology of the vertical fibration space $V_{x_i}$.
We find it convenient to use the square ball formulation as in display (\ref{vbundle}). Recall from (\ref{vmonodromy}) that $v_{q, x_i}$
denotes its monodromy in dimension $q$; we are going to analyze $v_{1, x_i}$.

We will write $v$ for the diffeomorphism of $F_{x_i}$ coming from the vertical fibration. It is shown in \cite{KM} that $F_{x_i}$ is naturally a union of pieces $F_r$
(for $r = 1, \dots, e$), with each piece arising from the $r^{th}$ stage of the resolution of the transverse slice. (These pieces are usually not connected.) The boundary between $F_r$ and $F_{r+1}$ is a set of cycles denoted by
$\mathcal{C}_r$, and otherwise the pairs of pieces are disjoint. There is also a set of boundary cycles $\mathcal{C}_e$ on the last piece $F_e$;
note that $\mathcal{C}_e$ is the boundary of $F_{x_i}$.
We show in \cite{KM} that $v$ preserves these pieces, permuting the elements
of each $\mathcal{C}_r$, and that the action on each $F_r$ is isotopic to a finite action. At the very end of the proof of Theorem 4.3 (8), we establish the following key fact.

\begin{theorem}\label{trans} 

The action of $v_{1, x_i}$ on the cycles in each $\mathcal {C}_r$ is transitive.  
\qedwhite

\end{theorem}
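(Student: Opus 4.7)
The plan is to induct on $r$, exploiting the recursive description of $F_{x_i}$ and of its vertical monodromy coming from the derivation process of Section \ref{setup}.

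\textbf{Base case, $r = 1$.} The piece $F_1$ and the cycles in $\mathcal{C}_1$ arise from the first stage of the embedded resolution of the transverse slice, which depends only on the truncation $\St$. The transverse slice of $\St$ consists of $b_i$ irreducible plane curves (the geometric meaning of $b_i$ recorded after (\ref{defb})), and each contributes equivariantly to $\mathcal{C}_1$; it therefore suffices to verify that a single loop of $x_i$ around the origin cyclically permutes these $b_i$ components. Listing the conjugates of $\zetat = x_1^{n_1/m_1}x_2^{n_2/m_2}$ and identifying two whenever they give the same curve on the slice $\{x_i = x_i^0\}$ presents the set of components as a torsor for the cyclic group
\begin{equation*}
\mu_{m_i}\big/\bigl(\mu_{m_i}\cap\mu_{m_{\ibar}}\bigr) \;=\; \mu_{m_i}/\mu_{\gcd(m_1,m_2)}
\end{equation*}
of order $b_i$. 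Traversing $x_i^0 \mapsto e^{2\pi\sqrt{-1}}\,x_i^0$ multiplies the representative by $e^{2\pi\sqrt{-1}\,n_i/m_i}$, which is a primitive $m_i$-th root of unity (since $\gcd(n_i,m_i)=1$), and hence descends to a generator of the cyclic quotient. Transitivity follows.

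\textbf{Inductive step.} For $r \geq 2$, the pieces $F_r, \dots, F_e$ and the intermediate cycles $\mathcal{C}_r, \dots, \mathcal{C}_{e-1}$ come from the resolution of the strict transform after the first blow-up, which is exactly the transverse slice of the derived surface $S'(i)$. Under this correspondence, $\mathcal{C}_r$ for $S$ is identified with $\mathcal{C}_{r-1}$ for $S'(i)$, and the renormalisation built into (\ref{eq:basic}) ensures that one revolution of $x_i$ around the origin for $S$ induces one revolution of the ``derived" base coordinate. Hence the restriction of $v_{1,x_i}$ to these deeper pieces realises the vertical monodromy of $S'(i)$, and the inductive hypothesis yields transitivity on $\mathcal{C}_r$.

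\textbf{Main obstacle.} The delicate point is the compatibility asserted in the inductive step: one must verify that $v_{1,x_i}$, restricted to the deeper pieces, does not merely sit inside but in fact generates the full orbit structure of the vertical monodromy of $S'(i)$. This reduces to tracking the base change hidden in the derivation formulas (\ref{eq:basic}) and confirming that a single loop of $x_i$ for $S$ unfolds into exactly the right number of loops in the derived coordinate. Once this is pinned down, the induction closes cleanly.
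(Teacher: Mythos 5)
First, a point of comparison: the paper does not prove Theorem \ref{trans} in this article at all --- the white box signals that the statement is imported from the end of the proof of Theorem 4.3(8) of \cite{KM}, where the decomposition of $F_{x_i}$ into the pieces $F_r$ and the action of $v$ on them is actually constructed. So your proposal is attempting a proof the authors deliberately outsourced, and it must therefore carry the full weight of establishing the orbit structure, not merely gesture at it.

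The decisive problem is that your ``inductive step'' leaves unproved exactly the content of the theorem, as your own ``main obstacle'' paragraph concedes. Saying that the compatibility ``reduces to tracking the base change hidden in (\ref{eq:basic})'' and that ``once this is pinned down, the induction closes cleanly'' is not a proof of the reduction; it is a restatement of what needs to be shown. Worse, the identification you assert --- that $\mathcal{C}_r$ for $S$ is identified with $\mathcal{C}_{r-1}$ for $S'(i)$ and that one revolution of $x_i$ induces one revolution of the derived base coordinate --- is inconsistent with the paper's own recursion. Formula \ref{chirecursion} contributes a term $b_i\,\chi'(i)$ and formula \ref{vertrecursion} contains the factor $\VV'(t^{b_i})$: together these say that the deeper pieces of $F_{x_i}$ consist of $b_i$ copies of the corresponding pieces of the derived transverse fiber, cyclically permuted by $v$, with $v^{b_i}$ (not $v$) realizing the vertical monodromy of $S'(i)$ on each copy. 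A correct induction must therefore prove two things: that $v$ permutes these $b_i$ blocks of $\mathcal{C}_r$ transitively, and that the $b_i$-th power acts on each block as the derived vertical monodromy, to which the inductive hypothesis applies; the composite action is then transitive only because these two ingredients mesh (a cyclic tower of transitive actions). Your base case computation of the $b_i$ components of the transverse slice of $\St$ is essentially right, but without the corrected covering structure and an actual verification of the monodromy compatibility --- which ultimately requires the resolution-theoretic construction of the $F_r$ from \cite{KM} --- the argument does not close.
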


The next result will provide the basis for computing the Betti numbers of the Milnor fiber boundary from the vertical monodromies.

\begin{theorem}\label{eigen} 

The Jordan blocks of the 1-eigenspace of $v_{1, x_i}:H_1(F_{x_i}) \to H_1(F_{x_i})$ have size 1.

\end{theorem}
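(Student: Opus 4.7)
The plan is to prove the stronger statement that $v$ itself is isotopic (as a self-diffeomorphism of $F_{x_i}$) to a globally periodic diffeomorphism $\Phi$. Once this is done, $v_{1,x_i}$ coincides with $\Phi_{*}$ on $H_1(F_{x_i};\CC)$; since $\Phi^N=\mathrm{id}$ for some $N$, $\Phi_{*}$ is annihilated by the separable polynomial $t^N-1$, hence is diagonalizable over $\CC$, so in particular its generalized 1-eigenspace equals its 1-eigenspace.

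To construct $\Phi$, use the decomposition $F_{x_i}=\bigcup_{r=1}^{e} F_r$ and the boundary circle families $\mathcal{C}_r$ recalled just before the theorem. On each piece $F_r$, the restriction $v|_{F_r}$ is already isotopic to a finite-order diffeomorphism $\phi_r$ by the result cited from \cite{KM}. On each $\mathcal{C}_r$, Theorem \ref{trans} gives a transitive permutation of circles by $v$, so some iterate of $v|_{\mathcal{C}_r}$ is a diffeomorphism of a single circle; up to isotopy this iterate is a rotation, so $v|_{\mathcal{C}_r}$ is itself isotopic to a periodic cyclic-shift-plus-rotation $\psi_r$.

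Next, adjust the $\phi_r$ to agree on each shared boundary. Both $\phi_r|_{\mathcal{C}_r}$ and $\phi_{r+1}|_{\mathcal{C}_r}$ are periodic diffeomorphisms of $\mathcal{C}_r$ isotopic to the common $\psi_r$, so using collar neighborhoods of $\mathcal{C}_r$ in $F_r$ and in $F_{r+1}$ one can modify $\phi_r$ and $\phi_{r+1}$ within their respective collars, through periodic isotopies, so that the boundary restrictions become exactly $\psi_r$ (the orders on each side being replaced by a common multiple). The adjusted pieces $\widetilde\phi_r$ then glue to a global $\Phi\in\mathrm{Diff}(F_{x_i})$, periodic of order the $\lcm$ of the pieces' orders and isotopic to $v$.

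The main obstacle is the collar interpolation: one must interpolate, through \emph{periodic} maps of an annulus $S^1\times[0,1]$, between a rotation on one boundary prescribed by $\phi_r$ and the rotation $\psi_r$ on the other. The classical structure of periodic surface diffeomorphisms shows that they are rotations near each invariant boundary component, so after passing to a common multiple of the two orders such an interpolation exists. If global isotopy to a periodic map fails for some more subtle reason---for example because the two sides of a common $\mathcal{C}_r$ impose incompatible rotation angles that cannot be harmonized without breaking periodicity---then the fallback is a $v$-equivariant Mayer--Vietoris argument: the outer terms $\bigoplus_r H_{*}(F_r)$ and $\bigoplus_r H_{*}(\mathcal{C}_r)$ carry semisimple $v$-actions, and Theorem \ref{trans} forces each 1-eigenspace of $H_0(\mathcal{C}_r)$ to be one-dimensional, which combined with an analysis of the connecting homomorphism rules out size-2 Jordan blocks at eigenvalue 1.
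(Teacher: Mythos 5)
Your primary route does not work, and the failure point is exactly the step you flag as ``the main obstacle.'' A finite-order diffeomorphism of the annulus $S^1\times[0,1]$ preserving each boundary circle is conjugate to a rigid rotation $(\theta,t)\mapsto(\theta+\alpha,t)$, so it induces the \emph{same} rotation number on both ends; passing to a common multiple of the orders does not help, because the discrepancy between the rotations forced on the two sides of a collar of $\mathcal{C}_r$ is a fractional Dehn twist (a screw number), and a nonzero twist is precisely the obstruction to a reducible mapping class being periodic. These twists are generically present here: the situation is the exact analogue of the monodromy of an irreducible plane curve with at least two Puiseux pairs, which is quasi-periodic but \emph{not} of finite order on $H_1$ (it has $2\times 2$ Jordan blocks at eigenvalues $\neq 1$). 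If your isotopy to a periodic $\Phi$ existed, $v_{1,x_i}$ would be semisimple on all of $H_1(F_{x_i})$ --- a statement that is false in general, which is why the theorem restricts attention to the eigenvalue $1$.

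Your fallback is the right idea and is essentially the paper's argument: the authors use the Mayer--Vietoris filtration $W_0\subset W_1\subset W_2$ of $H_1(F_{x_i})$, with $W_0$ the image of $H_1(\sqcup_r\mathcal{C}_r)$ and $W_1$ the image of $H_1(\sqcup_r F_r)$, on which $v$ acts semisimply. But you leave the decisive step (``an analysis of the connecting homomorphism'') unproven, and that is where all the content lies: an extension of semisimple modules by semisimple modules need not be semisimple. The paper closes the gap as follows. By Theorem \ref{trans} the only $v$-invariant class in the span of $\mathcal{C}_r$ is the orbit sum $\sum_s C_{r,s}$, which bounds $\bigcup_{j\le r}F_j$ and hence vanishes in $H_1(F_{x_i})$; therefore the $1$-eigenspace of $W_0$ (and of $W_0/R$, $R$ the radical of the intersection form) is zero. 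The intersection pairing puts $W_0/R$ in perfect, $v$-equivariant duality with $W_2/W_1$, so the $1$-eigenspace of $W_2/W_1$ is also zero; consequently the generalized $1$-eigenspace of $H_1(F_{x_i})$ lies in $W_1$, where $v$ is diagonalizable. To repair your write-up you must either reproduce this duality argument or directly show that the $v$-invariant part of $\ker\bigl(H_0(\sqcup_r\mathcal{C}_r)\to\oplus_r H_0(F_r)\bigr)$ vanishes (which can be done by peeling off the layers $F_1,F_2,\dots$ one at a time); as written, the claim is asserted rather than proved.
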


\begin{proof} 
There is a weight filtration on $H_1(F_{x_i}; \mathbb{C})$ defined by
\begin{align*}
W_0 &= \mbox{image of  } H_1(\sqcup_{r=1}^e \mathcal {C}_r) \text{ in } H_1(F_{x_i}),  \\
W_1 &= \mbox{image of  } H_1(\sqcup_{r=1}^e F_r) \text{ in } H_1(F_{x_i}), \\
W_2 &= H_1(F_{x_i}).
\end{align*}
The radical $R$ of the intersection pairing on $H_1(F_{x_i})$ is the image of $H_1(\partial F_{x_i})=H_1(\mathcal{C}_e)$  under inclusion, and thus is contained in $W_0$. The intersection pairing induces a perfect pairing on the quotients of the weight filtration, for which $W_0/R$ is dual to $W_2/W_1$ and for which $W_1/W_0$ is self-dual. This pairing is compatible with the action of $v_{1, x_i}$. Since $v$ acts finitely on each $F_r$, the restriction of $v_{1, x_i}$ acts finitely on $W_1$, hence is diagonalizable. On the other hand, by Theorem \ref{trans}, $v_{1, x_i}$ acts transitively on the cycles in $\mathcal {C}_r$, so the generator of the  fixed cycles inside the space generated by $\mathcal{C}_r$ is $\Sigma_s C_{r,s}$ (summing over all the elements of $\mathcal{C}_r$, with a consistent orientation).  This cycle is trivial in homology, since it bounds the union of components $F_i$, $i \leq r$. Therefore there are no non-trivial fixed cycles under the action of $v_{1, x_i}$ on $W_0$.  Because $v_{1, x_i}$ is diagonalizable on $W_0$, there is no 1-eigenspace on $W_0$ or on $W_0/R$. By duality, the 1-eigenspace of $H_1(F_{x_i})$ also does not meet $W_2/W_1$. Therefore the 1-eigenspace is contained in $W_1$, where $v_{1, x_i}$ is diagonalizable.
\end{proof} 

For more details about surface diffeomorphisms, see \cite{Wall} p.~269 ff., on which the previous argument is modeled.

\begin{corollary} \label{multiplicity}
Let $\xi(i)$ denote the dimension of the 1-eigenspace of $v_{1, x_i}$.
Then the multiplicity of $t-1$ in the monodromy zeta function $\VV(i)$
is $1-\xi(i)$.
\end{corollary}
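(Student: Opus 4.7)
The plan is to read off both factors of the zeta function at $t=1$ and use Theorem~\ref{eigen} to pass from generalized eigenspaces to eigenspaces. Writing
$$
\VV(i)(t)=\frac{\det(tI-v_{0,x_i})}{\det(tI-v_{1,x_i})},
$$
the multiplicity of $t-1$ in $\VV(i)(t)$ is the multiplicity of $1$ as a root of the numerator minus the multiplicity of $1$ as a root of the denominator.

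First I would analyze the denominator. In general the multiplicity of an eigenvalue $\mu$ as a root of the characteristic polynomial of an operator equals the dimension of the corresponding \emph{generalized} eigenspace, i.e.\ the total size of all Jordan blocks for $\mu$. Theorem~\ref{eigen} tells us that every Jordan block of $v_{1,x_i}$ for eigenvalue~$1$ has size $1$, so the generalized $1$-eigenspace coincides with the ordinary $1$-eigenspace. Hence the multiplicity of $t-1$ in $\det(tI-v_{1,x_i})$ is exactly $\xi(i)$.

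Next I would handle the numerator by showing that $F_{x_i}$ is connected and that $v_{0,x_i}$ is the identity on $H_0(F_{x_i})\cong\CC$, so that $\det(tI-v_{0,x_i})=t-1$. The transverse slice $F_{x_i}$ is the Milnor fiber of a reduced plane-curve germ (the slice of $S$ by $x_i=\mathrm{constant}$), and Milnor's theorem for hypersurface singularities makes any such Milnor fiber connected; alternatively one can argue inside the paper's own framework, using that the piece $F_1$ meets every subsequent piece $F_r$ through the nonempty cycle sets $\mathcal{C}_r$ and that the pieces assemble into a single connected Riemann surface with boundary. Either way $H_0(F_{x_i})$ is one-dimensional and $v_{0,x_i}=\mathrm{id}$, contributing a single factor of $t-1$ to the numerator.

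Combining the two computations gives multiplicity $1-\xi(i)$ as claimed. The only step that needs more than bookkeeping is the connectedness of $F_{x_i}$; if one prefers to avoid invoking Milnor's general theorem, the main obstacle is showing that the plumbing description from \cite{KM} produces a connected surface, but this is immediate from the incidence pattern $F_r\cap F_{r+1}=\mathcal{C}_r\ne\emptyset$ recalled just before Theorem~\ref{trans}.
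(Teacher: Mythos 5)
Your argument is correct and matches the paper's (one-line) proof, which likewise combines Theorem~\ref{eigen} with a connectedness statement: the paper invokes connectedness of the total space $V_{x_i}$ (so that $v_{0,x_i}$ permutes the components of $F_{x_i}$ transitively and the numerator contributes $t-1$ with multiplicity exactly one), whereas you establish the slightly stronger fact that the fiber $F_{x_i}$ itself is connected, which the paper also uses later in Theorem~\ref{firstbettivf}. Either connectedness statement suffices, and your handling of the denominator via the size-one Jordan blocks is exactly the intended use of Theorem~\ref{eigen}.
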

\begin{proof}
This follows from Theorem \ref{eigen} and the fact that the vertical fibration space is connected.
\end{proof}

\par
In the next three theorems we will deduce a formula for the first Betti number of the vertical fibration space. Recall from formulas (\ref{degtruncation}) and (\ref{defck}) the quantities 
$\dt^{(k)}(i)$ and $\ct^{(k)}(i)$.
For the surfaces $S^{(k)}(i)$ and $\St^{(k)}(i)$, we denote by $\xi^{(k)}(i)$ 
and $\xit^{(k)}(i)$ the respective dimensions of the 1-eigenspaces of vertical monodromy.

\begin{theorem}\label{eigencalc}
For $k=0$ through $e-2$, the dimension of the 1-eigenspace of the vertical monodromy of $S^{(k)}(i)$ is given by
\begin{equation*}
\xi^{(k)}(i) = d^{(k+1)}(i)(\ct^{(k)}(i) - 1)(\dt^{(k)}(i) -1) + \xi^{(k+1)}(i).
\end{equation*}
In particular (taking $k=0$)
\begin{equation*}
\xi(i) = d'(\ct - 1)(\dt -1) + \xi'(i).
\end{equation*}
At the other extreme,
\begin{equation*}
\xi^{(e-1)}(i) = (\ct^{(e-1)}(i)-1)( \dt^{(e-1)}(i) - 1).
\end{equation*}
\end{theorem}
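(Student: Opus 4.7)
The plan is to translate the calculation of the one-eigenspace dimensions into a calculation of the multiplicity of the factor $t-1$ in the vertical monodromy zeta functions, and then read off both the recursion and the base case from the explicit formulas (\ref{vertbasek}) and (\ref{vertrecursionk}).

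First, I would apply Corollary~\ref{multiplicity} to each prototype $S^{(k)}(i)$ and to each truncation $\St^{(k)}(i)$, noting that the vertical fibration space is connected in every case. Writing $\mu_k$ and $\mut_k$ for the multiplicity of $(t-1)$ in $\VV^{(k)}(i)$ and $\VVt^{(k)}(i)$ respectively, this yields $\xi^{(k)}(i)=1-\mu_k$ and $\xit^{(k)}(i)=1-\mut_k$. The point is that from here on, the entire argument takes place at the level of the cyclotomic factors of the zeta functions, with no further input from topology.

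Second, I would extract $\xit^{(k)}(i)$ directly from (\ref{vertbasek}). Every factor of the form $t^N-1$ contributes exactly one simple zero at $t=1$, so a signed count of the numerator and denominator gives
$$\mut_k \;=\; \dt^{(k)}(i) - \ct^{(k)}(i)\bigl(\dt^{(k)}(i)-1\bigr) \;=\; 1 - \bigl(\dt^{(k)}(i)-1\bigr)\bigl(\ct^{(k)}(i)-1\bigr),$$
so $\xit^{(k)}(i) = (\dt^{(k)}(i)-1)(\ct^{(k)}(i)-1)$. Since $S^{(e-1)}(i)$ has a single essential term and therefore coincides with its own truncation, this already yields the base case formula for $\xi^{(e-1)}(i)$.

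Third, for the recursive step I would use the key observation that substitution $t\mapsto t^{a}$ preserves the multiplicity of $t-1$ in any rational function that is a product or quotient of factors of the form $t^{N}-1$, because $t^{aN}-1$ still has a simple zero at $t=1$. A downward induction starting from $\VV^{(e-1)}(i)=\VVt^{(e-1)}(i)$ shows that every $\VV^{(k+1)}(i)(t)$ is of this cyclotomic form, so the substitution $t\mapsto t^{b_i^{(k)}}$ appearing in (\ref{vertrecursionk}) is harmless. Taking $(t-1)$-multiplicities in (\ref{vertrecursionk}) therefore gives $\mu_k = d^{(k+1)}(i)\,\mut_k + \mu_{k+1} - d^{(k+1)}(i)$, and substituting $\mu = 1-\xi$ together with the formula for $\mut_k$ immediately yields the stated recursion. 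The only step that requires any care is verifying the substitution lemma and that the cyclotomic form persists through the induction; both are routine, so no genuine obstacle is expected.
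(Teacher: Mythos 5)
Your proposal is correct and follows essentially the same route as the paper: convert to multiplicities of $t-1$ via Corollary~\ref{multiplicity}, read the base case off (\ref{vertbasek}), and take multiplicities in (\ref{vertrecursionk}) for the recursion. Your explicit justification that the substitution $t\mapsto t^{b_i^{(k)}}$ preserves the multiplicity of $t-1$ is a detail the paper leaves implicit (and it holds for any rational function, not only cyclotomic products, so the induction on the form of $\VV^{(k+1)}(i)$ is not even needed).
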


\begin{proof} 
Theorem \ref{trans}, Theorem \ref{eigen}, and Corollary \ref{multiplicity} apply to all the quasi-ordinary surfaces of our basic diagram (\ref{basicdiagram}).
Thus the multiplicity of $t-1$
in the vertical monodromy zeta function $\VV^{(k)}(i)$
is $1-\xi^{(k)}(i)$; similarly the multiplicity of $t-1$
in $\VVt^{(k)}(i)$ is $1-\xit^{(k)}(i)$.
Formula (\ref{vertrecursionk}) tells us that
$$
1-\xi^{(k)}(i) = d^{(k+1)}(i)(1-\xit^{(k)}(i)) + 1-\xi^{(k+1)}(i) - d^{(k+1)}(i),
$$
while formula (\ref{vertbasek}) implies that
\begin{equation} \label{trunck}
1-\xit^{(k)}(i) = \dt^{(k)}(i) - \ct^{(k)}(i)( \dt^{(k)}(i) - 1).
\end{equation}
Together they imply the first statement of the theorem.
Formula (\ref{trunck}) may also be applied with $k=e-1$;
since $S^{(e-1)}(i)$ and $\St^{(e-1)}(i)$ are the same surface,
this gives the last statement in the theorem.
\end{proof}

We now relate these quantities for the two possible sequences in our basic diagram
(\ref{basicdiagram}).

\begin{theorem}
For each $k$ from 0 to $e-1$, we have $\xi^{(k)}(1) = \xi^{(k)}(2)$.
\end{theorem}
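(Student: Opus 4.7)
The plan is to use downward induction on $k$, starting at $k = e-1$ and working down to $k=0$. All the machinery needed has already been assembled in Theorems \ref{equald}, \ref{equalc}, and \ref{eigencalc}, so the proof should be essentially a bookkeeping argument.

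For the base case $k = e-1$, the last formula of Theorem \ref{eigencalc} gives
\begin{equation*}
\xi^{(e-1)}(i) = (\ct^{(e-1)}(i)-1)(\dt^{(e-1)}(i) - 1).
\end{equation*}
Theorem \ref{equald} tells us $\dt^{(e-1)}(1) = \dt^{(e-1)}(2)$, and Theorem \ref{equalc} tells us $\ct^{(e-1)}(1) = \ct^{(e-1)}(2)$, so the right-hand side has the same value for $i=1$ and $i=2$.

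For the inductive step, assume $\xi^{(k+1)}(1) = \xi^{(k+1)}(2)$. The first formula of Theorem \ref{eigencalc} gives
\begin{equation*}
\xi^{(k)}(i) = d^{(k+1)}(i)(\ct^{(k)}(i) - 1)(\dt^{(k)}(i) -1) + \xi^{(k+1)}(i).
\end{equation*}
Each factor on the right is now known to be independent of the choice $i \in \{1,2\}$: the quantity $d^{(k+1)}(i)$ by part (1) of Theorem \ref{equald}, the quantity $\dt^{(k)}(i)$ by part (2) of the same theorem, the quantity $\ct^{(k)}(i)$ by Theorem \ref{equalc}, and $\xi^{(k+1)}(i)$ by the inductive hypothesis. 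Hence $\xi^{(k)}(1) = \xi^{(k)}(2)$, completing the induction.

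There is no real obstacle in this proof; the work was already done in establishing Theorems \ref{equald}, \ref{equalc}, and \ref{eigencalc}. The only thing to be careful about is the direction of the induction: because the recursion expresses $\xi^{(k)}(i)$ in terms of $\xi^{(k+1)}(i)$, one must induct downward from $k = e-1$ rather than upward from $k=0$.
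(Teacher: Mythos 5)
Your proof is correct and follows exactly the paper's own argument: the base case $k=e-1$ via the last formula of Theorem \ref{eigencalc} combined with Theorems \ref{equald} and \ref{equalc}, and then the downward recursion via the first formula of Theorem \ref{eigencalc}. Nothing is missing.
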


\begin{proof}
Theorems \ref{equald} and \ref{equalc} tell us that $d^{(k)}(1) = d^{(k)}(2)$,
$\dt^{(k)}(1) = \dt^{(k)}(2)$, and
$\ct^{(k)}(1) = \ct^{(k)}(2).$
Thus the last statement of Theorem \ref{eigencalc}
implies that $\xi^{(e-1)}(1) = \xi^{(e-1)}(2)$.
The first statement of  Theorem \ref{eigencalc} then tells us, recursively, that 
$\xi^{(k)}(1) = \xi^{(k)}(2)$ for $k=e-2, e-3, \dots, 0$.

\end{proof}

Now let $h_{1}(i) = \operatorname{dim}H_1(V_{x_i})$ be the first Betti number of the vertical fibration space.

\begin{theorem}\label{firstbettivf}
\quad $h_{1}(i) = \xi(i) + 1$. 
\end{theorem}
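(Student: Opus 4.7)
The plan is to compute $H_1(V_{x_i})$ from the Wang exact sequence of the fibration $F_{x_i} \hookrightarrow V_{x_i} \to S^1$ induced by the vertical fibration. The portion of this sequence involving the first homology of the total space reads
$$
H_1(F_{x_i}) \xrightarrow{v_{1,x_i} - I} H_1(F_{x_i}) \longrightarrow H_1(V_{x_i}) \longrightarrow H_0(F_{x_i}) \xrightarrow{v_{0,x_i} - I} H_0(F_{x_i}),
$$
which gives a short exact sequence
$$
0 \longrightarrow \operatorname{coker}(v_{1,x_i} - I) \longrightarrow H_1(V_{x_i}) \longrightarrow \ker(v_{0,x_i} - I) \longrightarrow 0.
$$
Consequently $h_1(i) = \dim \operatorname{coker}(v_{1,x_i} - I) + \dim \ker(v_{0,x_i} - I)$, and the plan reduces to computing these two summands separately.

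For the cokernel, I invoke Theorem \ref{eigen}. Since every Jordan block of $v_{1,x_i}$ at the eigenvalue $1$ has size $1$, the generalized 1-eigenspace equals the ordinary 1-eigenspace, so $\dim \ker(v_{1,x_i} - I) = \xi(i)$. By the rank-nullity theorem applied on the finite-dimensional space $H_1(F_{x_i})$, the cokernel has the same dimension, namely $\xi(i)$.

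For the $H_0$ term, I need $F_{x_i}$ to be connected. The transverse slice of $S$ obtained by fixing $x_i$ is a plane curve singularity (possibly reducible, with $b_i$ branches), and the Milnor fiber of any such isolated plane curve singularity is a connected Riemann surface with boundary. Therefore $H_0(F_{x_i}) \cong \mathbb{C}$, the operator $v_{0,x_i}$ is the identity, and $\ker(v_{0,x_i} - I)$ has dimension $1$. Adding the two contributions yields $h_1(i) = \xi(i) + 1$.

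The main obstacle is really nothing beyond Theorem \ref{eigen}, whose proof occupied most of the work above. Once the size-1 Jordan block property is in hand, the dimensions of kernel and cokernel of $v_{1,x_i} - I$ automatically agree, and the rest is a formal computation in the Wang sequence together with the elementary fact that a plane curve Milnor fiber is connected.
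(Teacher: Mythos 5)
Your proof is correct and follows essentially the same route as the paper: both extract $h_1(V_{x_i})$ from the Wang sequence of the vertical fibration, using that $F_{x_i}$ is connected (so $v_{0,x_i}-I=0$) and that $\dim\operatorname{coker}(v_{1,x_i}-I)=\dim\ker(v_{1,x_i}-I)=\xi(i)$ by rank--nullity. (Minor remark: since $\xi(i)$ is by definition the dimension of the ordinary 1-eigenspace $\ker(v_{1,x_i}-I)$, your appeal to Theorem \ref{eigen} is not actually needed at this step; the paper likewise does not use it here.)
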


\begin{proof} Working with
homology groups over $\mathbb{C}$,
we consider a portion of the Wang sequence of the 
vertical fibration:

\begin{equation}\label{wang}
H_2(V_{x_i}) \to H_1(F_{x_i})  \xrightarrow{v_{1, x_i} - 1}  H_1(F_{x_i}) \to H_1(V_{x_i})  \to H_0(F_{x_i})  \xrightarrow{v_{0, x_i} - 1}  H_0(F_{x_i}).
\end{equation}

(For the Wang sequence, see Lemma 8.4 of \cite{Mi}.)
The map $v_{0,x_i} - 1$ is trivial and $H_0(F_x) \cong \CC$.
Furthermore the kernel of $v_{1,x_i} - 1$ is the 1-eigenspace.
Taking an alternating sum of ranks, we find that
$$\xi(i) - \mu + \mu - h_{1}(i) + 1 = 0.$$
\end{proof}

\begin{corollary}\label{equalBetti} The first Betti numbers of the vertical fibration spaces are equal:
$$h_{1}(1) = h_{1}(2).$$
\qedwhite
\end{corollary}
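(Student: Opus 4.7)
The plan is to combine the two results immediately preceding the corollary: Theorem \ref{firstbettivf}, which expresses the first Betti number in terms of the dimension of the 1-eigenspace of the vertical monodromy, and the unnamed theorem asserting that $\xi^{(k)}(1) = \xi^{(k)}(2)$ for all $k$ from 0 to $e-1$.

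Specifically, I would first invoke Theorem \ref{firstbettivf} to write $h_1(i) = \xi(i) + 1$ for both $i=1$ and $i=2$. Then I would apply the equality $\xi^{(k)}(1) = \xi^{(k)}(2)$ at the special value $k=0$, which is exactly the statement $\xi(1) = \xi(2)$ (since $\xi^{(0)}(i) = \xi(i)$ under the indexing of Theorem \ref{eigencalc}). Substituting into the two expressions for $h_1(i)$ gives $h_1(1) = \xi(1) + 1 = \xi(2) + 1 = h_1(2)$.

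There is no real obstacle here; the work has already been done in the preceding theorems. The substantive content lies upstream: Theorem \ref{equald} and Theorem \ref{equalc} provide the arithmetic inputs $d^{(k)}(1) = d^{(k)}(2)$, $\dt^{(k)}(1) = \dt^{(k)}(2)$, and $\ct^{(k)}(1) = \ct^{(k)}(2)$, which feed into the recursion of Theorem \ref{eigencalc} to force $\xi(1) = \xi(2)$, and Theorem \ref{firstbettivf} then transports this equality to the Betti numbers via the Wang sequence. So the corollary is a purely formal consequence and its proof should fit on a single line.
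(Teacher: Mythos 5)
Your proposal is correct and matches the paper's intent exactly: the corollary is stated with no written proof precisely because it follows immediately from Theorem \ref{firstbettivf} together with the preceding theorem's case $k=0$, i.e.\ $\xi(1)=\xi(2)$. Nothing further is needed.
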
 

Henceforth we simply write $\xi$ instead of $\xi(1)$ or $\xi(2)$.
Recall that $M$ denotes the Milnor fiber of our quasi-ordinary surface $S$
and that $\partial M$ denotes its boundary.

\begin{theorem}
The boundary of the Milnor fiber is the union of the two vertical fibration spaces along
a common torus:
\begin{equation}\label{eq:boundary} 
\partial M = V_{x_1} \cup_{T^2} V_{x_2}.
\end{equation} 
\end{theorem}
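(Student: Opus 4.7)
The plan is to pass to the square-ball (polydisk) formulation of the Milnor fiber that is already built into the paper's definition (\ref{vbundle}) of $V_{x_i}$. Choose $\delta>0$ small enough that $S$ meets the torus $T=\{|x_1|=|x_2|=\delta\}$ transversely in its smooth locus, then fix $|\epsilon|\ll\delta$. I will work with the model
\[
M' \;=\; \{f=\epsilon\}\cap\{|x_1|\leq\delta,\ |x_2|\leq\delta\},
\]
which is diffeomorphic to $M$ by the standard argument that any fundamental system of neighborhoods of the origin whose boundaries meet $\{f=\epsilon\}$ transversely yields diffeomorphic Milnor fibers, after smoothing the corner of the polydisk along $T$.

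With this model, the topological boundary of the polydisk base $P=\{|x_1|\leq\delta,\ |x_2|\leq\delta\}$ is the union of the two walls $W_i=\{|x_i|=\delta,\ |x_{\ibar}|\leq\delta\}$, meeting along the corner $T$. Intersecting $\{f=\epsilon\}$ with each $W_i$ yields exactly the vertical fibration space of (\ref{vbundle}), so
\[
\partial M \,=\, V_{x_1}\cup V_{x_2}\qquad\text{with}\qquad V_{x_1}\cap V_{x_2}\,=\,\{f=\epsilon\}\cap\pi^{-1}(T).
\]

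It remains to identify the overlap as a single 2-torus. The base torus $T$ lies in $\CC^2\setminus\{x_1x_2=0\}$, which is the unbranched locus of $\pi$, so $S\cap\pi^{-1}(T)$ is a $d$-sheeted unramified cover of $T$. Connectedness follows from irreducibility of $S$: the smooth locus of the germ representative of $S$, restricted to the preimage of $P\setminus\{x_1x_2=0\}$, is connected (the complement of a proper analytic subset in a connected complex manifold), and the inclusion $T\hookrightarrow P\setminus\{x_1x_2=0\}$ is a homotopy equivalence. Hence the cover over $T$ corresponds to the same index-$d$ subgroup of $\pi_1\cong\ZZ^2$, and any connected finite unramified cover of a 2-torus is itself a 2-torus. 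For $|\epsilon|$ small, continuity guarantees that the discriminant of $f-\epsilon$ (in $y$) remains nonvanishing on $T$, so $\{f=\epsilon\}\cap\pi^{-1}(T)$ is likewise a connected $d$-sheeted unramified cover of $T$, identified with the $\epsilon=0$ torus by isotopy.

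The main obstacle is the first step, the passage from the round Milnor ball to the polydisk: one must justify that the two choices of base neighborhood yield diffeomorphic Milnor fibers and smooth the corner of the polydisk along $T$ without disturbing transversality. Once this is in hand, the decomposition of $\partial M$ is built into the definitions and the identification of the gluing torus reduces to routine covering-space theory.
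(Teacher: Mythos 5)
Your decomposition is the same as the paper's: pass to the square-ball (polydisk) model, observe that the topological boundary splits into the two walls $|x_1|=\delta$ and $|x_2|=\delta$, which are by definition the spaces $V_{x_1}$ and $V_{x_2}$, meeting over the corner torus $T=\{|x_1|=|x_2|=\delta\}$. Where you genuinely diverge is in identifying the overlap as a \emph{single} torus. The paper notes only that the intersection is a finite disjoint union of tori and then invokes Theorem \ref{trans} with $r=e$ (transitivity of the vertical monodromy on the boundary cycles $\mathcal{C}_e$, a fact imported from \cite{KM}) to conclude there is exactly one. You instead argue directly that $\{f=\epsilon\}\cap\pi^{-1}(T)\to T$ is a connected $d$-sheeted unramified cover: connectedness comes from irreducibility of the germ $S$ (its restriction over $P\setminus\{x_1x_2=0\}$ is connected), the homotopy equivalence $T\hookrightarrow P\setminus\{x_1x_2=0\}$, and a deformation from $\epsilon$ to $0$ using nonvanishing of the $y$-discriminant on the compact set $T$; a connected finite cover of $T^2$ is a torus. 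These two routes encode the same fact --- transitivity of the monodromy of the cover over $T$ on its fiber is precisely connectedness of that cover --- but yours is self-contained and elementary, needing only irreducibility and covering-space theory, while the paper's leans on the resolution-theoretic machinery of \cite{KM}. The one step you rightly flag as needing care, the diffeomorphism between the round-ball and polydisk Milnor fibers together with transversality to the walls and the corner, is also assumed without comment in the paper's own use of the square-ball formulation (\ref{vbundle}), so it is not a gap relative to the paper's standard of rigor.
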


\begin{proof}
Using the square ball formulation of (\ref{vbundle}),
one obtains the boundary of $M$ by letting either  $|x_1| = \delta$ or
$|x_2| = \delta$. These pieces are the vertical fibration spaces.
Their intersection, which lies inside $|x_1| = |x_2| = \delta$, consists of
a finite number of tori. Theorem \ref{trans}, applied with $r=e$, implies that in fact there is just
one torus.
\end{proof}

\begin{theorem} \label{firstbetti}
The first Betti number of $\partial M$ is given by 
$$h_1(\partial M) = 2\xi.$$
\end{theorem}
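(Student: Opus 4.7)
The plan is to apply the Mayer--Vietoris sequence to the decomposition $\partial M = V_{x_1} \cup_{T^2} V_{x_2}$ supplied by (\ref{eq:boundary}). The main task will be to control the map
\begin{equation*}
\phi_1 \colon H_1(T^2) \to H_1(V_{x_1}) \oplus H_1(V_{x_2}).
\end{equation*}
First I would tabulate the relevant ranks. By Theorem \ref{firstbettivf}, $\dim H_1(V_{x_i}) = \xi+1$; the Wang sequence in degree $2$, combined with $H_2(F_{x_i})=0$ (a surface with boundary), identifies $H_2(V_{x_i})$ with the $1$-eigenspace of $v_{1,x_i}$, hence gives $\dim H_2(V_{x_i}) = \xi$ by Theorem \ref{eigen} and Corollary \ref{multiplicity}. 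Next, $\partial M$ is a closed orientable $3$-manifold, so $H_3(\partial M)\cong\CC$, and the Mayer--Vietoris connecting homomorphism $H_3(\partial M) \to H_2(T^2)$ sends $[\partial M]$ to $[T^2]$ and hence is an isomorphism. Since $H_0(T^2) \to H_0(V_{x_1}) \oplus H_0(V_{x_2})$ is also injective (all spaces being connected), the middle of the Mayer--Vietoris sequence collapses to
\begin{equation*}
0 \to H_1(T^2) \xrightarrow{\phi_1} H_1(V_{x_1}) \oplus H_1(V_{x_2}) \to H_1(\partial M) \to 0
\end{equation*}
\emph{provided} $\phi_1$ is injective; in that case one reads off $h_1(\partial M) = 2(\xi+1) - 2 = 2\xi$.

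The heart of the proof is therefore to show that $\phi_1$ is injective. For each $i$, $V_{x_i}$ is a compact orientable $3$-manifold with $\partial V_{x_i} = T^2$. Using Poincar\'e--Lefschetz duality on the pair $(V_{x_i},T^2)$ together with the ranks computed above, I would show in the standard ``half lives, half dies'' manner that the kernel $K_i$ of $H_1(T^2) \to H_1(V_{x_i})$ has dimension exactly $1$. I would then identify $K_i$ explicitly. Let $\gamma_{\ibar}$ denote a primitive loop on $T^2$ running in the $x_{\ibar}$-direction (i.e., with $x_i$ fixed, letting $x_{\ibar}$ wind around its circle the minimal number of times needed to close up). Such a loop lies in a single fiber $F_{x_i}$, indeed on its boundary, so it represents an integer combination of the cycles in $\mathcal{C}_e$. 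By Theorem \ref{trans} the vertical monodromy permutes $\mathcal{C}_e$ transitively, so all of these cycles are homologous in $V_{x_i}$; meanwhile the argument given in the proof of Theorem \ref{eigen} shows that their full sum $\sum_s C_{e,s}$ bounds. So each cycle in $\mathcal{C}_e$ is torsion in $H_1(V_{x_i};\ZZ)$ and vanishes over $\CC$, forcing $\gamma_{\ibar} \in K_i$; by the dimension count, $K_i = \langle \gamma_{\ibar} \rangle$.

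Finally, the torus $T^2$ covers the coordinate torus $\{|x_1|=|x_2|=\delta\}\subset\CC^2$ with positive degree $d$, so the images of $\gamma_1$ and $\gamma_2$ in $H_1$ of the coordinate torus are nonzero scalar multiples of the two standard generators and thus $\QQ$-linearly independent. Hence $\gamma_1$ and $\gamma_2$ themselves are linearly independent in $H_1(T^2;\QQ)$, and the two one-dimensional subspaces $K_1 = \langle \gamma_2 \rangle$ and $K_2 = \langle \gamma_1 \rangle$ are distinct; so $K_1 \cap K_2 = 0$ and $\phi_1$ is injective, completing the proof. I expect the principal obstacle to be the explicit identification of $\gamma_{\ibar}$ with a boundary cycle of $F_{x_i}$: one must check carefully that the closing-up procedure produces a genuine primitive class in the $x_{\ibar}$-direction and that its image in $H_1(F_{x_i})$ really is supported on $\mathcal{C}_e$, so that the vanishing argument above captures a nontrivial element of $K_i$ rather than only the zero class.
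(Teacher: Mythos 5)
Your proof is correct, and it rests on the same Mayer--Vietoris decomposition $\partial M = V_{x_1}\cup_{T^2}V_{x_2}$ and the same final count $2(\xi+1)-2=2\xi$ as the paper; the only real divergence is in how you establish injectivity of $\phi_1\colon H_1(T^2)\to H_1(V_{x_1})\oplus H_1(V_{x_2})$. The paper's argument for that step is a one-liner: the composite of $T^2\to V_{x_1}\times V_{x_2}$ with the two fibration projections is the restriction of $\pi$ to $T^2$, a finite covering of the coordinate torus $\{|x_1|=|x_2|=\delta\}$, hence an isomorphism on $H_1$ with $\CC$-coefficients, so $\phi_1$ is injective with no appeal to duality and no need to locate the kernels. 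Your route instead pins each kernel down exactly: the ``half lives, half dies'' consequence of Poincar\'e--Lefschetz duality gives $\dim K_i=1$ (note this step needs none of the ranks you tabulated, so the Wang-sequence computation of $H_2(V_{x_i})$ is superfluous), and then Theorem \ref{trans} together with the fact that $\sum_s C_{e,s}=\partial F_{x_i}$ bounds shows each boundary cycle $C_{e,s}$ --- in particular $\gamma_{\ibar}$ --- is torsion in $H_1(V_{x_i};\ZZ)$, whence $K_i=\langle\gamma_{\ibar}\rangle$ and $K_1\cap K_2=0$. This is longer, but it buys something the paper's argument does not: an explicit identification of which curve on the splice torus dies rationally in each piece, i.e., the meridian data one would need to present $\partial M$ as a graph manifold in the spirit of Section \ref{notes}. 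Your closing worry is easily dispatched: the subset of $T^2$ on which $x_i$ is fixed is precisely $\mathcal{C}_e=\partial F_{x_i}$, the closing-up procedure traces out a single component $C_{e,s}$ of it, and this class is primitive (and in particular nonzero) in $H_1(T^2;\ZZ)$ because any proper divisor of it in the $x_{\ibar}$-direction would have closed up sooner.
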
 

\begin{proof} 
Consider a portion of the Mayer-Vietoris sequence:
\begin{equation*}
H_1(T^2)  \xrightarrow{i}  H_1(V_{x_1}) \oplus H_1(V_{x_2}) \to H_1(\partial M) \xrightarrow{\partial}  H_0(T^2)
\to H_0(V_{x_1}) \oplus H_0(V_{x_2})
\end{equation*}

The last map is injective, and thus $\partial=0$.
If one composes the map
\begin{equation*}
T^2 \to V_{x_1} \times V_{x_2}
\end{equation*}
with the product of projections onto the bases of these fibrations,
\begin{equation*}
V_{x_1} \times V_{x_2} \to S^1 \times S^1,
\end{equation*}
one obtains a finite covering space map.
Thus the induced map on homology 
\begin{equation*}
H_1(T^2) \to H_1(S^1) \oplus H_1(S^1)
\end{equation*}
is an isomorphism; hence $i$ is injective.
Thus $h_1(\partial M) = h_1(1)+h_1(2) -2 = 2\xi$
by Theorem \ref{firstbettivf}.  
\end{proof}

\section{Notes} \label{notes}
Our comments here are somewhat informal; we will present careful proofs elsewhere.
The Milnor fiber of an irreducible quasi-ordinary surface germ, or at least a good model of it, is essentially a relative version of a singular plane curve. Both the Milnor fiber and its boundary can be viewed in terms of families, over products of circles, of curve Milnor fibers. Moreover, these families are constructed from parametrized versions of the plumbing construction for irreducible curves, in which pieces of the Milnor fiber are defined iteratively in terms of Puiseux exponents. The corresponding ingredients in our case are the characteristic exponents and the Milnor fibers of single-term truncations of the quasi-ordinary parametrization, obtained after partial resolution.
Although this paper deals exclusively with surfaces, we believe it contains all the ingredients needed
for analysis of any quasi-ordinary hypersurface.
\par
In this paper, our comparison between the two derived surfaces $S^{(k)}(1)$ and $S^{(k)}(2)$ has been numerical:
we have given a detailed description of how certain invariants of the two surfaces are related.
In ongoing work, however, we have discovered that the comparison goes deeper.
In what follows, we will describe the deeper relationship between the two derived surfaces, and explain the greater importance of the matrix $M^{(k)}$ of display (\ref{mk}).

We have seen that the two vertical fibration spaces have the same Betti numbers. Moreover, it turns out that the vertical fibration spaces are related in a simple topological way. Consider the vertical fibration spaces for the truncation $\St$. From each bundle we remove $\dt$ thickened circles (solid tori). We will show that these punctured vertical fibration spaces are isomorphic (as {\em spaces}, not as bundles). Then we will iterate the construction. The punctured vertical fibration spaces of the $k^{th}$ truncations 
$\St^{(k)}(i)$ are again isomorphic. 
Copies of these basic pieces are glued together along tori to form the total vertical fibration spaces, analogous to the classical construction of the Milnor fiber of a curve. We need $\dt^{(k)}$ copies of the $(k-1)^{st}$ construction to glue on to the $k^{th}$ truncation. Thus the two vertical fibration spaces are isomorphic, once $d=\dt \dt' \dt^{(2)} \cdots \dt^{(e-1)}$ disjoint solid tori are removed from each one. This fact, and the next theorem, will not be proved in this article, but play an important role in its sequel. Writing

\begin{equation*}
M^{(k)}
 =\
\begin{bmatrix}
M^{(k)}_{11} & M^{(k)}_{12} \\ 
M^{(k)}_{21} & M^{(k)}_{22}
\end{bmatrix},
\end{equation*}

we claim that the mapping $$\rho: (x_1, x_2, z) \rightarrow (x_1^{M^{(k)}_{11}}x_2^{M^{(k)}_{21}}, x_1^{M^{(k)}_{12}}x_2^{M^{(k)}_{22}}, z)$$ restricts to a homeomorphism between prototypes of
$S^{(k)}(1)$ and $S^{(k)}(2)$, and this map is 
invertible away from the set $\{x_1x_2 = 0\}$. Note that the projections of these spaces to $\mathbb{C}^2_{x_1,x_2}$ are unramified over the axes. Thus the locus of points where $\rho$ is not defined consists of $d^{(k)}$ topological 4-balls. Removing these balls
from each space, we thus obtain the following conclusion.

\begin{theorem} \label{nextpaper}
The punctured Milnor fibers of the surfaces $S^{(k)}(1)$ and $S^{(k)}(2)$ are homeomorphic.
\qed
\end{theorem}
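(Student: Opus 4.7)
The plan is to combine the explicit ambient substitution $\rho$, the matrix identity of Theorem \ref{matrixeq}, and the unramified structure of the projection over the axes.

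First, I would verify that $\rho$ carries the prototype of $S^{(k)}(1)$ onto that of $S^{(k)}(2)$ on the complement of $\{x_1 x_2 = 0\}$. Substituting $X_1 = x_1^{M^{(k)}_{11}} x_2^{M^{(k)}_{21}}$ and $X_2 = x_1^{M^{(k)}_{12}} x_2^{M^{(k)}_{22}}$ into the monomial $X_1^{\lambda^{(k)}_{1j}(2)} X_2^{\lambda^{(k)}_{2j}(2)}$ yields
\[
x_1^{M^{(k)}_{11}\lambda^{(k)}_{1j}(2) + M^{(k)}_{12}\lambda^{(k)}_{2j}(2)}\, x_2^{M^{(k)}_{21}\lambda^{(k)}_{1j}(2) + M^{(k)}_{22}\lambda^{(k)}_{2j}(2)},
\]
and Theorem \ref{matrixeq} identifies the exponents as $\lambda^{(k)}_{1j}(1)$ and $\lambda^{(k)}_{2j}(1)$. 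Summing over $j$ gives $\zeta^{(k)}(2) \circ \rho = \zeta^{(k)}(1)$ as multivalued series, and the analogous identity for the defining polynomials follows by taking products over Galois conjugates, which match bijectively because $M^{(k)} \in \SL(2,\mathbb{Z})$ acts invertibly on the exponent lattice. That same membership gives an explicit inverse on $(\mathbb{C}^*)^2 \times \mathbb{C}$ via the integer entries of $(M^{(k)})^{-1}$, so $\rho$ is a biholomorphism of this open set that restricts to a biholomorphism between the two prototypes away from the axes.

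Next, I would transfer this to the Milnor fibers. The square-ball presentation from (\ref{vbundle}) is well adapted here, since $\rho$ acts monomially on each coordinate circle; choosing compatible radii (depending on the entries of $M^{(k)}$) produces a homeomorphism between the portions of the two Milnor fibers lying over $\{x_1 x_2 \neq 0\}$.

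Finally, I would analyze the complement. Because $S$, and hence each $S^{(k)}(i)$, is reduced in Lipman's sense, the projection to $\mathbb{C}^2_{x_1,x_2}$ is unramified over the coordinate axes, so the axis preimage in $S^{(k)}(i)$ near the origin splits into $d^{(k)}$ disjoint smooth local branches. Intersecting the Milnor fiber with a small tubular neighborhood of each such branch yields a topological 4-ball, accounting for all points where $\rho$ is undefined. Removing these $d^{(k)}$ balls from each Milnor fiber produces the punctured Milnor fiber, and the previous paragraph supplies a homeomorphism of the two complements.

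The main obstacle is in this last step: rigorously showing that the tubular neighborhood of each axis-branch in the Milnor fiber is a 4-ball, counting these to get exactly $d^{(k)}$, and verifying that $\rho$ extends continuously across the boundaries of the removed balls. This amounts to a careful local topological analysis of the Milnor fiber near each smooth component of $\{x_1 x_2 = 0\} \cap S^{(k)}(i)$, along with a matching of the boundary tori across $\rho$—the detailed work that the authors indicate is deferred to the sequel.
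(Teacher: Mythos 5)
Your proposal follows essentially the same route the paper sketches: the monomial map $\rho$ determined by $M^{(k)}$, the identity $\zeta^{(k)}(2)\circ\rho=\zeta^{(k)}(1)$ coming from Theorem \ref{matrixeq}, invertibility on $\{x_1x_2\neq 0\}$ via $\SL(2,\mathbb{Z})$, and excision of the $d^{(k)}$ topological $4$-balls over the axes where $\rho$ is undefined. You also correctly identify the remaining technical step (the local analysis near the axes and the matching of boundaries) as exactly the part the authors defer to the sequel.
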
 

Looking at the boundaries of these Milnor fibers, we conclude that they 
are homeomorphic away from $d^{(k)}$ disjoint thickened circles.
Furthermore, the analysis that leads to a proof of Theorem \ref{nextpaper}
shows that, in the decomposition (\ref{eq:boundary})
of each of these Milnor fibers, the two vertical fibration spaces $V_{x_1}$ and $V_{x_1}$
are homeomorphic away from these thickened circles.

Proofs of these statements should pave the way for analyzing the Milnor fiber of a quasi-ordinary hypersurface of arbitrary dimension.


\section{An example} \label{example}

Consider the surface with branch
\begin{equation*}
\zeta = x_1^{2/7} x_2^{4/5} + x_1^{5/14} x_2 + x_1^{2} x_2^{19/10}.
\end{equation*}

Here $e=3$. Following the layout of our basic diagram (\ref{basicdiagram}), we display the branches of the associated surfaces when $i=1$:

\small
{
\begin{align*} 
\zeta &= x_1^{2/7} x_2^{4/5} + x_1^{5/14} x_2 + x_1^{2} x_2^{19/10} &
\zeta'(1) &= x_1^{705/2} x_2^{141} + x_1^{373} x_2^{291/2} &
\zeta''(1) &= x_1^{1451} x_2^{573/2}
 \\
\zetat &= x_1^{2/7} x_2^{4/5} &
\zetat'(1) &= x_1^{705/2} x_2^{141} &
\zetat''(1) &= x_1^{1451} x_2^{573/2}
\end{align*}
}

When $i=2$, the branches of the surfaces are as follows:
\small
{
\begin{align*} 
\zeta &= x_1^{2/7} x_2^{4/5} + x_1^{5/14} x_2 + x_1^{2} x_2^{19/10} &
\zeta'(2) &= x_1^{141/2} x_2^{987} + x_1^{82} x_2^{2259/2} &
\zeta''(2) &= x_1^{305} x_2^{606303/2}
 \\
\zetat &= x_1^{2/7} x_2^{4/5} &
\zetat'(2) &= x_1^{141/2} x_2^{987} &
\zetat''(2) &= x_1^{305} x_2^{606303/2}
\end{align*}
}

In both cases, the degrees of these surfaces are:
\begin{align*} 
d &= 140 &
d' &= 4 &
d'' &= 2
 \\
\dt &= 35 &
\dt' &= 2 &
\dt'' &= 2
\end{align*}
We also have:
\begin{align*} 
\ct &= 2 &
\ct' &= 141 &
\ct'' &= 1
\end{align*}

Observe that
\begin{equation*}
M^{(1)}
\begin{bmatrix}
141/2  \\ 
987 
\end{bmatrix}
= 
\begin{bmatrix}
-23  & 2 \\ 
-12 & 1
\end{bmatrix}
\begin{bmatrix}
141/2  \\ 
987 
\end{bmatrix}
=
\begin{bmatrix}
705/2  \\ 
141 
\end{bmatrix} 
\qquad\text{and}\qquad
M^{(1)}
\begin{bmatrix}
82  \\ 
2259/2 
\end{bmatrix}
= 
\begin{bmatrix}
373  \\ 
291/2 
\end{bmatrix}
\end{equation*}
in accordance with Theorem \ref{matrixeq}. 
Similarly, with the auxiliary matrix $U^{(1)}$
we have
\begin{equation*}
U^{(1)}
\begin{bmatrix}
141/2  \\ 
141 
\end{bmatrix}
= 
\begin{bmatrix}
1  & 2 \\ 
12 & 1
\end{bmatrix}
\begin{bmatrix}
141/2  \\ 
141 
\end{bmatrix}
=
\begin{bmatrix}
705/2  \\ 
987 
\end{bmatrix} 
\qquad\text{and}\qquad
U^{(1)}
\begin{bmatrix}
82  \\ 
 291/2
\end{bmatrix}
= 
\begin{bmatrix}
373  \\ 
2259/2 
\end{bmatrix}.
\end{equation*}

The second auxiliary matrix $U^{(2)}$, calculated from $U^{(1)}$ by the recursion of 
(\ref{urecursion}), is
\begin{equation*}
U^{(2)}
=
\begin{bmatrix}
\frac{2}{2} \cdot 1 
& \frac{2}{1} \cdot 2 + 2 \cdot 0 \cdot \frac{705}{2} 
\\
\frac{1}{2} \cdot 12 + 1 \cdot 1 \cdot 987 
& \frac{1}{1} \cdot 1
\end{bmatrix}
=
\begin{bmatrix}
1 
& 4 
\\
993 
& 1
\end{bmatrix}
\end{equation*}
and thus
\begin{equation*}
M^{(2)}
=
\sw(U^{(2)})
=
\begin{bmatrix}
-3971 
& 4 
\\
-993 
& 1
\end{bmatrix}.
\end{equation*}
Observe that
\begin{equation*}
M^{(2)}
\begin{bmatrix}
305  \\ 
606303/2 
\end{bmatrix}
=
\begin{bmatrix}
1451  \\ 
573/2 
\end{bmatrix}.
\end{equation*}

Using the formulas of Theorem \ref{eigencalc}, we find these
dimensions of the 1-eigenspaces of the vertical monodromy:
\begin{align*} 
\xi'' &= (\ct''-1) (\dt''-1) = (1-1)(2-1) = 0
 \\
\xi' &= d''(\ct'-1)(\dt'-1)+\xi''=2(141-1)(2-1)+0=280
\\
\xi &= d'(\ct-1)(\dt-1)+\xi'=4(2-1)(35-1)+280=416
\end{align*}
Thus each of the two vertical fibration spaces has first Betti number 417,
and the first Betti number of the boundary of the Milnor fiber is 832.


{\bf Note:} The second author thanks Patrick Popescu-Pampu and Andr{\'a}s N{\'e}methi for much good advice and insight, received during a sabbatical trip to their respective institutions in the Spring of 2012. Their suggestions proved fruitful, if a bit delayed! This work was partially supported by a grant from the Simons Foundation (\#318310 to Gary Kennedy).


\end{document}